\newtheorem{theorem}{Theorem}[section]
\newtheorem{lemma}[theorem]{Lemma}
\newtheorem{proposition}[theorem]{Proposition}
\newtheorem{corollary}[theorem]{Corollary}
\theoremstyle{definition}
\theoremstyle{remark}
\newtheorem{remark}[theorem]{Remark}
\numberwithin{equation}{section}
\newcommand{\dens}{\mathrm{dens}}
\renewcommand{\span}{\mathrm{span}}
\newcommand{\dist}{\mathrm{dist}}
\newcommand{\conv}{\mathrm{conv}}
\newcommand{\R}{\mathbb{R}}
\newcommand{\X}{\mathrm{X}}
\renewcommand{\H}{\mathrm{H}}
\newcommand{\Y}{\mathrm{Y}}
\newcommand{\Z}{\mathrm{Z}}
\newcommand{\B}{\mathbf{B}}
\renewcommand{\S}{\mathbf{S}}
\renewcommand{\ker}{\mathrm{Ker}}
\renewcommand{\mod}{/}
\newcommand{\1}{\boldsymbol{1}}
\newcommand{\cf}{\mathrm{cf}}
\newcommand{\SCP}{\mathrm{SCP}}
\newcommand{\codens}{\mathrm{codens}}
\newcommand{\SD}{\mathrm{SD}}
\begin{document}

\title{Extracting long basic sequences from systems of dispersed vectors}

\begin{abstract}
We study Banach spaces satisfying some geometric or structural properties involving tightness of transfinite sequences of nested linear subspaces. These properties are much weaker than WCG and closely related to Corson's property $(C)$.
Given a transfinite sequence of normalized vectors, which is dispersed or null in some sense, we extract a subsequence which is a biorthogonal sequence, or even a weakly null monotone basic sequence, depending on the setting. 
The Separable Complementation Property is established for spaces with an M-basis under rather weak geometric properties. We also consider an analogy of the Baire Category Theorem for the lattice of closed linear subspaces.
\end{abstract}

%    Information for first author
\author{Jarno Talponen}
%    Address of record for the research reported here
\address{Aalto University, Institute of Mathematics, P.O. Box 11100, FI-00076 Aalto, Finland} 
\email{talponen@cc.hut.fi}
%    \thanks will become a 1st page footnote.
\date{\today}
\subjclass{Primary 46B20, 46B26; Secondary 46Bxx, 46M40}
\keywords{basic sequence, transfinite sequence, minimal sequence, biorthogonal system, WCG, WLD, SCP, property C,
Baire category, inverse limit}
\maketitle

\section{Introduction}
Many Banach spaces have a rich structure of projections (see e.g. \cite{Kubis}). This can be applied in classifying spaces, and it is also often easier to work in separable complemented fragments of the space.
For example, spaces with a Schauder basis admit a very convenient structure of projections, especially if the basis is unconditional. On the other hand, spaces with very few projections (even few operators, see e.g. \cite{Ma}), like hereditarily idecomposable spaces, are currently an object of wide interest. This is partly due to several dichotomies about the existence of projections, roughly stating that if there are reasonably many projections, then there already exists a basic sequence with some required properties, like unconditionality (see e.g. \cite{AT2}, \cite{Gowers}).

This paper deals with nonseparable Banach spaces enjoying some structural properties much weaker than reflexivity. These
properties involve tightness conditions for transfinite chains of nested linear subspaces. We aim to show that 
spaces with such chains admit plenty of bounded linear projections. As a result, we will establish the separable complementation property, the existence of basic sequences, or other such properties depending on the setting. 
As the title of the paper suggests, the main problem here is to extract a transfinite basic sequence or a biorthogonal sequence from a net of vectors, which is in some sense far from being constant.

The following theorem is typical here and it is a kind of prototypical consequence of the main results.
\begin{theorem}\label{thm: first_main_0}
Let $\X$ be a Banach space and $\{x_{\alpha}\}_{\alpha<\kappa}\subset \X$ a normalized sequence, where $\kappa$ is an uncountable regular cardinal. 
\begin{enumerate}
\item[(1)]{If $\{x_{\alpha}\}_{\alpha<\kappa}$ is weakly null, then there is a subsequence $\{x_{\alpha_{\sigma}}\}_{\sigma<\kappa}$ which forms a monotone basic sequence.}
\item[(2)]{If $\X$ has Corson's property $(\mathrm{C})$ and $\{x_{\alpha}\}_{\alpha<\kappa}$ is dispersed (resp. strongly dispersed), then there is a subsequence $\{x_{\alpha_{\sigma}}\}_{\sigma<\kappa}$ which forms a bounded biorthogonal sequence 
(resp. a monotone basic sequence).}
\end{enumerate}
Moreover, each nonseparable Plichko space contains an uncountable monotone basic sequence.
\end{theorem}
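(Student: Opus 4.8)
The plan is to exploit the projectional structure that every Plichko space carries, rather than to route through parts (1) or (2): a nonseparable Plichko space need not have property $(\mathrm{C})$ (for instance $C[0,\omega_1]$ is Plichko but not weakly Lindel\"of determined), so (2) is unavailable, and producing an uncountable weakly null normalized sequence to feed into (1) is not automatic. Instead I would use that a Plichko space admits a projectional resolution of the identity (PRI), i.e.\ a transfinite family of norm-one projections $\{P_{\alpha}\}_{\omega\le\alpha\le\mu}$, $\mu=\dens(\X)$, with $P_{\alpha}P_{\beta}=P_{\beta}P_{\alpha}=P_{\min(\alpha,\beta)}$, $P_{\mu}=I$, $\dens(P_{\alpha}\X)\le\abs{\alpha}$, and $\overline{\bigcup_{\beta<\alpha}P_{\beta}\X}=P_{\alpha}\X$ at limit stages (the norm-one bound comes from the $1$-norming $\Sigma$-subspace; in general one first performs the standard reduction that makes the norming constant equal to $1$). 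The whole point is that the norm-one projections of a PRI, once restricted to a carefully chosen closed span, become norm-one initial-segment projections, which is exactly what monotonicity of a basic sequence requires.

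First I would reduce to density $\aleph_1$ by passing to a subspace $\Z=\overline{\span}\{u_{\gamma}:\gamma\in\Gamma_0\}$ spanned by a suitable subfamily of an M-basis with $\abs{\Gamma_0}=\aleph_1$, which is again Plichko and so carries a PRI $\{P_{\alpha}\}_{\omega\le\alpha\le\omega_1}$. In this regime every $P_{\alpha}\Z$ with $\alpha<\omega_1$ is separable, hence each successor increment $P_{\alpha+1}-P_{\alpha}$ has separable range; since $\Z=\overline{\bigcup_{\alpha<\omega_1}P_{\alpha}\Z}$ is nonseparable, the set $N=\{\alpha<\omega_1:P_{\alpha+1}\ne P_{\alpha}\}$ cannot be countable, for otherwise $\Z$ would be the closed span of $P_{\omega}\Z$ together with countably many separable increments. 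Thus $\card(N)=\aleph_1$. Enumerate the first $\omega_1$ elements of $N$ increasingly as $\{\alpha_{\sigma}\}_{\sigma<\omega_1}$ and choose normalized $y_{\sigma}\in(P_{\alpha_{\sigma}+1}-P_{\alpha_{\sigma}})\Z$; the supporting bands $(\alpha_{\sigma},\alpha_{\sigma}+1]$ are pairwise disjoint and increasing in $\sigma$.

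The decisive step is then purely formal. Put $\Y=\overline{\span}\{y_{\sigma}:\sigma<\omega_1\}$. Using $P_{\alpha}P_{\beta}=P_{\min(\alpha,\beta)}$ one checks that $P_{\alpha_{\gamma}}y_{\sigma}=y_{\sigma}$ for $\sigma<\gamma$ and $P_{\alpha_{\gamma}}y_{\sigma}=0$ for $\sigma\ge\gamma$; consequently $P_{\alpha_{\gamma}}$ maps $\Y$ into $\Y$, and its restriction $R_{\gamma}:=P_{\alpha_{\gamma}}|_{\Y}$ is exactly the projection of $\Y$ onto $\overline{\span}\{y_{\sigma}:\sigma<\gamma\}$ annihilating the remaining $y_{\sigma}$. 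Since $\norm{P_{\alpha_{\gamma}}}=1$ we get $\norm{R_{\gamma}}\le1$; the family $\{R_{\gamma}\}$ is increasing and commuting, $\bigcup_{\gamma}R_{\gamma}\Y$ is dense in $\Y$, it is continuous at limit ordinals, and each increment $R_{\gamma+1}-R_{\gamma}$ is the rank-one projection onto $\span\{y_{\gamma}\}$. By the transfinite analogue of the basis criterion this exhibits $\{y_{\sigma}\}_{\sigma<\omega_1}$ as a monotone transfinite basic sequence, which is the desired uncountable monotone basic sequence.

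The main obstacle is the structural input, not the extraction: one must invoke that a Plichko space admits a PRI with norm-one projections (and, after the density reduction, separable increments), and one must verify cleanly that the restricted family $\{R_{\gamma}\}$ genuinely witnesses a monotone transfinite basis, in particular that completeness and limit-stage continuity transfer from the PRI. A secondary point worth recording is that this argument is insensitive to weak-topology considerations, so it applies even to non-WLD examples such as $C[0,\omega_1]$; when an uncountable normalized weakly null sequence happens to be available --- as the band vectors are whenever each functional meets only countably many bands --- one could alternatively invoke part (1) directly.
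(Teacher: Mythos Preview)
Your argument for the Plichko clause is correct, but it proceeds along a genuinely different line from the paper. The paper (Theorem~\ref{thm: one_norming_M-basis}) never invokes a PRI: it works directly with the countably $1$-norming M-basis $\{x_{\alpha}\}_{\alpha<\theta}$ and defines, for each $A\subset\theta$, a set $\Lambda(A)\supset A$ with $|\Lambda(A)|\le |A|\vee\omega$ such that $[x_{\alpha}:\alpha\in\theta\setminus\Lambda(A)]^{\bot}$ is $1$-norming on $[x_{\alpha}:\alpha\in A]$. One then picks $\phi(\mu)\in\theta\setminus\Lambda(\{\phi(\alpha):\alpha<\mu\})$ recursively; the $1$-norming condition makes each initial-segment projection contractive exactly as in your verification with the $R_{\gamma}$. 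This yields a monotone basic sequence of the \emph{full} length $\theta=\dens(\X)$, and the basic vectors are themselves M-basis vectors (indeed, for regular $\theta$ the map $\phi$ can be taken increasing). Your route trades this for the PRI machinery: after the density-$\aleph_{1}$ reduction you import the existence of a $1$-PRI as a black box and harvest vectors from the increments, obtaining a length-$\omega_{1}$ sequence whose members are not M-basis vectors. The paper's argument is thus more elementary and delivers a sharper conclusion, while yours is a clean corollary of a heavier structural theorem; both correctly sidestep parts~(1) and~(2), and your observation that $C[0,\omega_{1}]$ blocks a direct appeal to property~$(C)$ is well taken. One small point worth making explicit in your write-up: the subspace $Z=[\{u_{\gamma}:\gamma\in\Gamma_{0}\}]$ with $|\Gamma_{0}|=\aleph_{1}$ is indeed nonseparable, since a countable dense set would lie in $[\{u_{\gamma}:\gamma\in A\}]$ for some countable $A\subset\Gamma_{0}$, contradicting minimality of the M-basis.
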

This result will be given in more generality, and we will shortly provide the definition of dispersed and strongly dispersed sequences. Actually, the principle behind the above statement $(1)$ has been essentially known since the work of Bessaga-Pelczynski in 1958 (\cite{BP}), at least in the countable setting, and it is a natural example of the type of phenomena studied here. We will also study, motivated by the constructions of the projections, the intersections of a certain kind of fat subspaces.

Recall that the unit ball of a reflexive Banach space is weakly compact, which implies that each normalized sequence has a cluster point with respect to the weak topology. A similar conclusion holds for sequences of length $\omega_{1}$ 
should the unit ball be weakly Lindel\"{o}f. Thus, above we increased the length of the sequence while weakening the hypothesis. The possibility for this kind of trade-off is typical for Banach spaces. This is mostly due to the fact that Banach spaces are always countably 
tight in the norm and weak topologies, and more importantly, their \emph{duals} are often countably tight in the $\omega^{\ast}$-topology, or usually at least something similar holds. So, even though the weak clustering of \emph{countable} sequences is not necessarily common 
in general Banach spaces, the weak clustering of \emph{uncountable} sequences (of vectors, subspaces, etc.) is typical and can be ensured by imposing rather weak geometric or structural conditions. Thus the applications of countable and uncountable combinatorics differ considerably in Banach spaces. 

The following hypothesis for Banach spaces $\X$ becomes useful here:
\begin{itemize}
\item[$\bullet$]{For any uncountable, regular cardinal $\kappa$ each nested sequence $\{A_{\alpha}\}_{\alpha<\kappa}$ of closed affine subspaces of $\X$ has non-empty intersection.}
\end{itemize}
No infinite-dimensional space satisfies the above condition if we consider countable sequences instead. However, the above hypothesis follows readily if the space in question is Lindel\"{o}f in the weak topology, e.g. a WCG space.
Some other, related tightness conditions for subspace chains will be considered as well. Interestingly, in the last section it turns out that these conditions can be reformulated as properties of the canonical inclusion mappings 
$\X\hookrightarrow \underset{\longleftarrow}{\lim}\ \X\mod Y_{\alpha}$, where the range is an inverse limit corresponding to the chain $\{A_{\alpha}\}_{\alpha<\kappa}$.

In the light of some recent work, (e.g. \cite{DLT}, \cite{Ko}, \cite{To}), it is perhaps surprising that we do not require any additional set theoretic axioms to accomplish the main results. 
Namely, we will apply, in addition to Banach space theory, some fairly elementary techniques of infinitary combinatorics in topology.

Various types of problems regarding subsequences of \emph{countable} weakly null sequences have been studied previously in abundance (see e.g. \cite{AG}, \cite{LoTo}, \cite{Ray}, \cite{Ros}), but the combinatorial techniques and the conclusions appear to be very different in the countable vs. uncountable settings.

\newpage
\subsection{Preliminaries}
\ \\

Real, infinite-dimensional Banach spaces are typically denoted by $\X,\Y,\Z$. 
We denote by $\B_{\X}$ the closed unit ball of $\X$ and by $\S_{\X}$ the unit sphere of $\X$. 

Unless otherwise stated, $\lambda$ is a limit ordinal, $\kappa$ is an uncountable regular cardinal, and we will apply cardinal arithmetic (instead of ordinal arithmetic) notations. A subsequence $\{\alpha_{\sigma}\}_{\sigma<\beta}$ of $\lambda$
is a mapping $\beta \to \lambda,\ \sigma\mapsto \alpha_\sigma$, and, unless otherwise stated, subsequences are strictly increasing here.

See \cite{HHZ}, \cite{En} and \cite{Ne} for the standard notions in Banach spaces, set theory and topology, respectively.
We refer to Zizler's survey \cite{Z} on the nonseparable Banach spaces for most of the definitions and results used here.
The references \cite{Gr}, \cite{Hajek_biortsyst}, \cite{Sh}, \cite{SingerI}, \cite{SingerII}, \cite{To} provide suitable backround information on different kinds of biorthogonal and related systems.
See also the articles \cite{Do}, \cite{Kalenda2000}, \cite{La} and \cite{Sh2} related to the topics in this paper.

Denote $\dist(A,B)=\inf_{a\in A, b\in B}\|a-b\|$, where $A,B\subset \X$. 
We call a subset $A\subset\X$ \emph{generating} if $[A]$, the closed linear span of $A$, is $\X$.
Recall that $\mathcal{F}\subset \X^{\ast}$ is a separating subset, if and only if for each $x\in\X$ there is $f\in\mathcal{F}$
such that $f(x)\neq 0$, if and only if $\overline{[\mathcal{F}]}^{\omega^{\ast}}=\X^{\ast}$.
We say that a Banach space $\X$ has the \emph{Countable Separation Property} (CSP), if each separating subset 
$\mathcal{F}\subset\X^{\ast}$ contains a countable separating subset $\mathcal{F}_{0}\subset\mathcal{F}$.
Suppose that $\{(x_{\alpha},x^{\ast}_{\alpha})\}_{\alpha<\lambda}\subset \X\times \X^{\ast}$ 
is a biorthogonal system, i.e. $x_{\alpha}^{\ast}(x_{\beta})=\delta_{\alpha,\beta}$. We call $\{x_{\alpha}\}_{\alpha<\lambda}$ a \emph{biorthogonal sequence} for brevity. Equivalently, $\{x_{\alpha}\}_{\alpha<\lambda}$ is \emph{minimal}, that is, $x_{\beta}\notin [x_{\alpha}:\ \alpha\neq \beta]$ for all $\beta$. The latter concept is more operational for topological vector spaces. A biorthogonal system $\{(x_{\alpha},x^{\ast}_{\alpha})\}_{\alpha}$ is \emph{bounded}, if $\sup_{\alpha}\|x_{\alpha}\| \cdot \|x_{\alpha}^{\ast}\|<\infty$. The analogous minimality notion is the following: $\{x_{\alpha}\}_{\alpha<\lambda}$ is 
\emph{uniformly minimal}, if there is $C>0$ such that $\dist(x_{\beta},[x_{\alpha}:\ \alpha\neq \beta])>C$ for $\beta<\lambda$.
If $[x_{\alpha}: \alpha<\lambda]=\X$ and $\overline{[x^{\ast}_{\alpha}:\ \alpha<\lambda]}^{\omega^{\ast}}=\X^{\ast}$, then $\{(x_{\alpha},x^{\ast}_{\alpha})\}_{\alpha<\lambda}$ is called a \emph{Markusevic basis} or \emph{M-basis}. 
If $\{(x_{\alpha},x^{\ast}_{\alpha})\}_{\alpha<\lambda}$ is an M-basis on $[x_{\alpha}: \alpha<\lambda]$, then $\{x_{\alpha}\}_{\alpha<\lambda}$ is called an M-basic sequence.
We often do not explicitly include the functionals in M-basis.

Recall that a Banach space $\X$ has the \emph{Separable Complementation Property} (SCP) (resp. $1$-SCP), if each separable subspace is contained in a complemented (resp. $1$-complemented) subspace of $\X$.
A Banach space is \emph{Hereditarily Indecomposable} ($\mathrm{HI}$), if no subspace can be written as a direct sum of two of its infinite-dimensional closed subspaces. See e.g. \cite{AT} and \cite{AT2} for dicussion.

Given a limit ordinal $\theta$, a sequence $\{x_{\alpha}\}_{\alpha<\theta}\subset\X \setminus \{0\}$ is said to be a (transfinite) basic sequence, if there is a constant $1\leq C<\infty$ such that $\|y\| \leq C \|y+z\|$
for all $y\in [x_{\alpha}:\ \alpha<\lambda]$, $z\in [x_{\alpha}:\ \lambda\leq \alpha<\theta]$ and all $\lambda<\theta$ (see \cite[p.589]{SingerII}). In such case there are natural linear basis projections 
$P_{\lambda}\colon [x_{\alpha}:\ \alpha<\theta]\to [x_{\alpha}:\ \alpha<\lambda]$ such that $\|P_{\lambda}\|\leq C$ for $\lambda<\theta$. The basic sequence is said to be \emph{monotone}, if the basis projections are contractive, i.e. $C=1$.

Recall that a compact Hausdorff space $K$ is called a \emph{Corson compact}, if it can be embedded in a $\Sigma$-product of real lines, and a Banach space $\X$ is Weakly Lindel\"{o}f Determined (WLD), if $(\B_{\X^{\ast}},\omega^{\ast})$ is a Corson compact. 
The following condition is equivalent to the fact that $\X$ is WLD 
(see e.g. \cite[Thm.4.17]{Kalenda_Extracta2000}): There is an M-basis $\{x_{\alpha}\}_{\alpha}$ of $\X$ such that
\begin{equation}\label{eq: count}
|\{\alpha :\ f(x_{\alpha})\neq 0\}|\leq \aleph_{0}\quad \mathrm{for\ any}\ f\in \X^{\ast}. 
\end{equation}
A subspace $\Y\subset\X^{\ast}$ is called $r$-norming, $0< r\leq 1$, if $\inf_{x\in\S_{\X}}\sup_{x^{\ast}\in \S_{\Y}}x^{\ast}(x)\geq r$, or, equivalently, $r\B_{\X^{\ast}}\subset \overline{\B_{\Y}}^{\omega^{\ast}}$.
The space $\Y$ is norming, if it is $r$-norming for some $r>0$.
A Banach space is called \emph{Plichko}, if it admits a \emph{countably $1$-norming M-basis}, that is, an M-basis $\{x_{\alpha}\}_{\alpha\in \Gamma}$ such that 
$\{f\in \X^{\ast}:\ |\gamma:\ f(x_{\gamma})\neq 0|\leq \aleph_{0}\ \}\subset \X^{\ast}$ is a $1$-norming subspace.

Recall that a topological space $(T,\tau)$ is \emph{countably tight}, if for any $A\subset T$ and $x\in \overline{A}^{\tau}$ there is a countable subset $A_{0}\subset A$
such that $x\in \overline{A_{0}}^{\tau}$. The following folklore facts will be applied frequently, sometimes implicitly. 
Suppose that $(T,\tau)$ is a countably tight topological space and $\{E_{\alpha}\}_{\alpha<\kappa}$ is a family of closed subsets of $T$ such that $E_{\alpha}\subset E_{\beta}$ for $\alpha<\beta<\kappa$. 
Then $\overline{\bigcup_{\alpha<\kappa}E_{\alpha}}^{\tau}=\bigcup_{\alpha<\kappa}E_{\alpha}$. We will study several properties of Banach spaces $\X$ that are weakenings of $\omega^{\ast}$-countable tightness of the dual space $\X^{\ast}$.

Given a limit ordinal $\lambda$, we say that a sequence $\{x_{\alpha}\}_{\alpha<\lambda}\subset \X$ is \emph{dispersed} if 
\[\bigcap_{\gamma<\lambda}[x_{\alpha}:\ \gamma<\alpha<\lambda] \subsetneq [x_{\alpha}:\ \beta<\alpha<\lambda] \]
holds for all $\beta<\lambda$ and \emph{strongly dispersed} (SD) if it is dispersed and 
\[\bigcap_{\beta<\lambda}[x_{\alpha}:\ \beta<\alpha<\lambda]=\{0\}.\]
For example, any biorthogonal sequence (resp. M-basic sequence) is a dispersed (resp. $\SD$) sequence under any re-ordering. Here we are particularly interested in spaces admitting a normalized dispersed sequence with $\lambda$ being a regular uncountable cardinal. Nonseparable spaces not admitting any such $\SD$ sequence have been studied in \cite{Tal}.

Note that according to the Hahn-Banach theorem any weakly null sequence $\{x_{\alpha}\}_{\alpha<\kappa}\subset\X\setminus \{0\}$ is $\SD$. More generally, if $\Z\subset\X^{\ast}$ is a separating subspace, then each $\sigma(\X,\Z)$-null sequence of length $\kappa$ is $\SD$. If the Lindel\"{o}f number of $\X$ in its weak topology is less than $\kappa$, then also the converse holds, see Proposition \ref{prop: linde}. 
The sequences $\{e_{\alpha}\}_{\alpha<\lambda}\subset\ell^{1}(\lambda)$, $\{\1_{(\alpha,\lambda]}\}_{\alpha<\lambda}\subset C([0,\lambda])$ and $\{\1_{[\alpha,\lambda)}\}_{\alpha<\lambda}\subset \ell^{\infty}(\lambda)$ provide examples of $\SD$ sequences, 
which are not weakly null. Next, we will give as an example two important classes of spaces where $\SD$ sequences behave particularly well.
\begin{remark}
\

\begin{enumerate}
\item[$(1)$]{Let $\H$ be a Hilbert space and $\{x_{\alpha}\}_{\alpha<\kappa}\subset \H$ a normalized $\SD$ sequence. Then there is an orthonormal subsequence $\{x_{\alpha_{\sigma}}\}_{\sigma<\kappa}$.}
\item[$(2)$]{Let $\X$ be a weakly Lindel\"{o}f Banach space with an unconditional basis and let $\{x_{\alpha}\}_{\alpha<\kappa}\subset \X$ be a $\SD$ sequence. Then there is an unconditional basic sequence $\{x_{\alpha_{\sigma}}\}_{\sigma<\kappa}$.}
\end{enumerate}
The verification of these claims is easy after the considerations in Section 3.
\end{remark}

\section{Structural considerations}

\ \\
In this section we will study properties of Banach spaces, much weaker than reflexivity, related to chains of subspaces.

\subsection{Banach spaces enjoying good uncountable asymptotics}
\ \\
Next, we will enumerate some conditions involving the structure of Banach spaces $\X$ that subsequently turn out to be useful. These conditions are kind of subspace counterparts for the $\omega^{\ast}$-countable tightness of the dual space, 
or the weaker property $(C)$. Here $\kappa$ will stand for any uncountable regular cardinal, and $1\leq r <\infty$ is a real number.

\begin{enumerate}
\item[$(C)$]{$\X$ is said to have property $(C)$ (after H. Corson \cite{Corson}), if each family of closed convex sets of $\X$ with empty intersection has a countable subfamily with empty intersection.}
\item[$(C')$]{An equivalent reformulation of property $(C)$ (proved by R. Pol \cite{Pol}): given a set
$A\subset\X^{\ast}$ and $f\in \overline{A}^{\omega^{\ast}}$, there is a countable subset $A_{0}\subset A$ such that 
$f\in \overline{\conv}^{\omega^{\ast}}(A_{0})$.}
\item[$(\mathrm{I})$]{Each nested sequence of closed affine subspaces $\{A_{\alpha}\}_{\alpha<\kappa}$ of $\X$ has non-empty intersection.}
\item[$(\mathrm{II})$]{Given a nested sequence $\{Z_{\alpha}\}_{\alpha<\kappa}$ of closed linear subspaces of $\X$, there is for each $f\in \left(\bigcap_{\alpha<\kappa}Z_{\alpha}\right)^{\bot}$ an ordinal $\alpha<\kappa$ such that $f\in Z_{\alpha}^{\bot}$.}
\item[$(\mathcal{B})$]{Let $\{Z_{\alpha}\}_{\alpha<\kappa}$ be a nested sequence of closed linear subspaces of $\X$ such that
$\bigcap_{\alpha<\kappa}\Z_{\alpha}=\{0\}$. Then $\bigcap_{\alpha<\kappa}\overline{\B_{\X}+Z_{\alpha}}$ is bounded. (Considered in \cite{Tal}.)}
\item[$(r$-$\mathcal{B})$]{For $\{Z_{\alpha}\}_{\alpha<\kappa}$ as in $(\mathcal{B})$
it holds that $\bigcap_{\alpha<\kappa}\overline{\B_{\X}+Z_{\alpha}}\subset r\B_{\X}$.}
\end{enumerate}

Recall that we have the following implications: WCG $\implies$ WLD $\implies$ weakly Lindel\"{o}f $\implies$ property $(C)$. It is easy to see that $(C)\implies (\mathrm{I})$, $(C')\implies (\mathrm{II})$ and that the condition in $(\mathcal{B})$ (resp. in $(r$-$\mathcal{B}))$ holds, if and only if $\Y=\bigcup_{\alpha<\kappa}Z_{\alpha}^{\bot}\subset\X^{\ast}$ is a norming (resp. $1/r$-norming) subspace.

When applying these conditions, we use one sequence of subspaces at the time. So, it is not really essential here, if the respective conclusions are valid simultaneously for every sequence (and actually we could have formulated the conditions
$(\mathrm{I})$, $(\mathrm{II})$, $(\mathcal{B})$ specific to a given sequence). In many cases there are spaces 
failing the conditions, while there is a specific chain of subspaces, which clearly satisfies the respective condition.
These concepts are also discussed in the last section in terms of inverse limits. Next, we will briefly examine some examples.

Let us consider the space $\ell^{\infty}_{c}(\omega_{1})$ of countably supported vectors in $\ell^{\infty}(\omega_{1})$, and we denote by $Z_{\alpha},\ \alpha<\omega_{1},$ the subspace, where the first $\alpha$ coordinates vanish. Then $\bigcap_{\alpha<\omega_{1}}\B_{\ell^{\infty}_{c}(\omega_{1})}+Z_{\alpha}=\B_{\ell^{\infty}_{c}(\omega_{1})}$ (as in $(1$-$\mathcal{B})$), and it can be seen fairly easily that for each $f\in (\ell^{\infty}_{c}(\omega_{1}))^{\ast}$ there is $\alpha<\omega_{1}$ such that $f\in Z_{\alpha}^{\bot}$ (as in $(\mathrm{II})$). However, putting $\{\1_{[0,\alpha]}+Z_{\alpha}\}_{\alpha<\omega_{1}}$ defines a nested sequence of affine subspaces in $\ell^{\infty}(\omega_{1})$, 
whose intersection is the singleton $(1,1,1,\ldots)\in \ell^{\infty}(\omega_{1})$. This vector clearly escapes $\ell^{\infty}_{c}(\omega_{1})$, so that condition $(\mathrm{I})$ fails. 

On the other hand, suppose that $Y_{\alpha}\subset \ell^{\infty}(\omega_{1}),\ \alpha<\omega_{1},$ are the subspaces 
supported on $(\alpha,\omega_{1})$. Then $\bigcap_{\alpha<\omega_{1}}\1_{[0,\alpha]}+Y_{\alpha}=\{(1,1,1,\ldots)\}$ (as in $(\mathrm{I})$), 
$\bigcup_{\alpha<\omega_{1}}Y_{\alpha}^{\bot}\neq \ell^{\infty}(\omega_{1})^{\ast}$ ($(\mathrm{II})$ fails), $\bigcap_{\alpha<\omega_{1}} \B_{\ell^{\infty}(\omega_{1})}+Y_{\alpha}=\B_{\ell^{\infty}(\omega_{1})}$ (as in $(1$-$\mathcal{B})$).

An example of a space which does not satisfy $(\mathcal{B})$ has been constructed in \cite{Tal}, and the space $\ell^{1}(\omega_{1})$ is a more simple example. Namely, for each $\alpha=\omega\cdot \gamma + n <\omega_{1}$, 
where we apply ordinal arithmetic, $\gamma<\omega_{1}$ and $0<n<\omega$ is regarded as a real number, 
let $x_{\alpha}=n e_{n}+ (1 / n) e_{\alpha}\in \ell^{1}(\omega_{1})$ and $x_{\alpha}=0$ for limits $\alpha$. 
Then $n e_{n}\in [x_{\alpha}:\ \delta<\alpha<\omega_{1}]+(1/n)\B_{\ell^{1}(\omega_{1})}$ for each $\delta<\omega_{1}$ and $0<n<\omega$. 
Thus $n e_{n}\in \bigcap_{\delta<\omega_{1}}[x_{\alpha}:\ \delta<\alpha<\omega_{1}]+(1/m)\B_{\ell^{1}(\omega_{1})}$ for $n\geq m$.
By modifying this example one can find a chain of subspaces of $\ell^{1}(\omega_{1})$ corresponding to condition $(r$-$\mathcal{B})$ sharp for any given $r\geq 1$.

\begin{proposition}\label{prop1}
\

\begin{enumerate}
\item[(i)]{Condition $(\mathrm{I})$ of $\X$ yields that each quotient and subspace of $\X$ satisfies $(\mathcal{B})$.} 
\item[(ii)]{Condition $(\mathrm{II})$ of $\X$ yields that each quotient and subspace of $\X$ satisfies $(1$-$\mathcal{B})$.}
\item[(iii)]{The Countable Separation Property implies $(\mathrm{I})$ and $(1$-$\mathcal{B})$.}
\item[(iv)]{Suppose that $\X$ satisfies $(\mathrm{II})$. Then $\{x_{\alpha}\}_{\alpha<\kappa}\subset\X\setminus \{0\}$ is $\SD$ if and only if it is weakly null if and only if it is $\sigma(\X,\Z)$-null for any separating subspace $\Z\subset\X^{\ast}$.}
\end{enumerate}

\end{proposition}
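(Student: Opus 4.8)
The plan is to treat the four items essentially independently, reducing (i) and (ii) to a single space by an inheritance observation, reading (iv) directly off $(\mathrm{II})$, and isolating the one genuinely delicate point in (iii). First I would record that \emph{both} $(\mathrm{I})$ and $(\mathrm{II})$ pass to closed subspaces and to quotients. For a subspace $W\subseteq\X$, a nested chain of closed affine subspaces of $W$ is also such a chain in $\X$, and the intersection point furnished by $(\mathrm{I})$ automatically lies in $W$; for a quotient $\X/V$ one pulls a chain back along the quotient map $\pi$ to $\{\pi^{-1}(A_\alpha)\}$ in $\X$ and pushes an intersection point forward. The same two maneuvers (using Hahn--Banach extension in the subspace case and $\bigcap_\alpha\pi^{-1}(Z_\alpha)=\pi^{-1}(\bigcap_\alpha Z_\alpha)$ in the quotient case) give the inheritance of $(\mathrm{II})$. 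Thus (i) reduces to ``$(\mathrm{I})\Rightarrow(\mathcal B)$'' and (ii) to ``$(\mathrm{II})\Rightarrow(1$-$\mathcal B)$'' for $\X$ itself. Item (ii) is then immediate: if $\bigcap_\alpha Z_\alpha=\{0\}$, then $(\mathrm{II})$ yields $\bigcup_\alpha Z_\alpha^{\bot}=(\bigcap_\alpha Z_\alpha)^{\bot}=\X^{\ast}$, which is trivially $1$-norming, so by the norming reformulation recorded before the proposition $\X$ enjoys $(1$-$\mathcal B)$.

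For (i) I would argue by contraposition via the same reformulation. If $(\mathcal B)$ fails for a chain $\{Z_\alpha\}$ with $\bigcap_\alpha Z_\alpha=\{0\}$, then $\Y:=\bigcup_\alpha Z_\alpha^{\bot}$ is not norming, so the canonical map $J\colon\X\to\Y^{\ast}$ (injective, since $\Y$ separates) is not bounded below and $J(\X)$ is not closed. Fix $\eta\in\overline{J(\X)}\setminus J(\X)$ and set $A_\alpha=\{x\in\X: f(x)=\eta(f)\text{ for all }f\in Z_\alpha^{\bot}\}$. Each $A_\alpha$ is a closed affine subspace (a translate of ${}^{\bot}(Z_\alpha^{\bot})=Z_\alpha$), the family is nested because $Z_\alpha^{\bot}$ increases, and $\bigcap_\alpha A_\alpha=J^{-1}(\eta)=\emptyset$. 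The point needing care is that each $A_\alpha$ is \emph{non-empty}: the map $x\mapsto Jx|_{Z_\alpha^{\bot}}$ is exactly the isometric inclusion $\X/Z_\alpha\hookrightarrow(Z_\alpha^{\bot})^{\ast}$, and $\eta|_{Z_\alpha^{\bot}}$ is a norm-limit of elements of $\X/Z_\alpha$; as $\X/Z_\alpha$ is complete it is closed in its bidual, so $\eta|_{Z_\alpha^{\bot}}$ is realized by some $x\in A_\alpha$. This produces a nested chain of non-empty closed affine subspaces with empty intersection, contradicting $(\mathrm{I})$.

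Item (iv) follows at once from $(\mathrm{II})$. The only substantial implication is $\SD\Rightarrow$ weakly null, the others being the already-recorded fact that $\sigma(\X,\Z)$-null sequences are $\SD$ (take $\Z=\X^{\ast}$) together with the triviality that a weakly null net is $\sigma(\X,\Z)$-null for every $\Z\subseteq\X^{\ast}$. So let $\{x_\alpha\}$ be $\SD$ and put $Z_\beta=[x_\alpha:\beta<\alpha<\kappa]$; this is a nested chain with $\bigcap_\beta Z_\beta=\{0\}$ by definition of $\SD$. Given $f\in\X^{\ast}=(\bigcap_\beta Z_\beta)^{\bot}$, $(\mathrm{II})$ gives $\beta<\kappa$ with $f\in Z_\beta^{\bot}$, i.e. $f(x_\alpha)=0$ for all $\alpha>\beta$; hence $f(x_\alpha)\to0$ and the net is weakly null.

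Finally (iii). The $(1$-$\mathcal B)$ half mirrors (ii): for a chain with $\bigcap_\alpha Z_\alpha=\{0\}$ the subspace $\Y=\bigcup_\alpha Z_\alpha^{\bot}$ separates $\X$, so CSP gives a countable separating $\{f_n\}\subseteq\Y$; by regularity of $\kappa$ all $f_n$ lie in a single $Z_\beta^{\bot}$, whence $Z_\beta\subseteq\bigcap_n\ker f_n=\{0\}$ and $\Y=\X^{\ast}$ is $1$-norming. For $(\mathrm{I})$ I would show, under CSP, that a nested chain of non-empty closed affine subspaces $A_\alpha=a_\alpha+V_\alpha$ has non-empty intersection by proving the subspace chain $\{V_\alpha\}$ eventually constant: when $V_\infty:=\bigcap_\alpha V_\alpha=\{0\}$ the $(1$-$\mathcal B)$ argument applies verbatim to give $V_\beta=\{0\}$ for some $\beta$, so $A_\beta$ is a singleton and, by nestedness, equals $\bigcap_\alpha A_\alpha$. \emph{The main obstacle is precisely the case $V_\infty\neq\{0\}$}: there $\Y=\bigcup_\alpha V_\alpha^{\bot}$ only separates $\X/V_\infty$, not $\X$, so CSP of $\X$ cannot be fed into it directly. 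I expect to clear this by passing to $\X/V_\infty$, which reduces matters to the solved case provided CSP descends to that quotient (equivalently, provided some countable subfamily of $\Y$ already separates $\X/V_\infty$); establishing this descent is the delicate step, since the naive device of enlarging $\Y$ to a separating family of all of $\X$ may shift the entire separating burden off $\Y$. The fallback I would pursue is to prove the needed instance of quotient-stability of CSP by hand for chains, exploiting that $\Y$ is $w^{\ast}$-dense in $V_\infty^{\bot}=(\X/V_\infty)^{\ast}$.
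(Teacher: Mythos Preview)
Your proposal is considerably more explicit than the paper's own proof, which simply defers (i), (ii), and the substance of (iii) to \cite{Tal}, and handles (iv) exactly as you do. Your arguments for (ii), (iv), and the $(1$-$\mathcal{B})$ half of (iii) are correct and match the paper's approach. Your direct proof of $(\mathrm{I})\Rightarrow(\mathcal{B})$ in (i)---constructing an empty-intersection affine chain from a non-closed-range embedding $J\colon\X\to\Y^{\ast}$---is correct and gives genuine content where the paper only cites \cite{Tal}; it is essentially an unwinding of the inverse-limit picture in Section~5 (where the paper observes that $(\mathrm{I})$ makes $\phi$ an isomorphism, hence a fortiori an embedding, which is $(\mathcal{B})$).

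On the $(\mathrm{I})$ half of (iii): you have correctly located the real difficulty. The key fact the paper quotes from \cite{Tal}---that under CSP a nested chain with \emph{trivial} intersection is eventually $\{0\}$---is exactly what your $(1$-$\mathcal{B})$ argument proves, but as you note it does not by itself settle $(\mathrm{I})$ when $V_\infty\neq\{0\}$. What is actually needed is the stronger statement that \emph{any} nested chain eventually stabilizes at its intersection, and this is equivalent to the trivial-intersection version holding in the quotient $\X/V_\infty$. The paper's proof does not address this step explicitly either; it simply cites \cite{Tal} and stops, so presumably the stronger stabilization (or the requisite quotient-stability) is what is established there. Your instinct to isolate this as the delicate point is sound; your proposed fallback---arguing the needed descent directly for chains via the $\omega^{\ast}$-density of $\bigcup_\alpha V_\alpha^{\bot}$ in $V_\infty^{\bot}$---is the right direction, but you have not yet closed it, and the naive enlargement-to-a-separating-family-of-$\X$ trick indeed fails for the reason you give. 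In short: no error, but the gap you flag is genuine and shared with the paper's sketch; completing it requires either the full content of \cite{Tal} or an independent argument that CSP (or at least the chain-vanishing consequence) descends to quotients.
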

\begin{proof}
To check (i) and (ii), it is easy to check that $(\mathrm{I})$ and $(\mathrm{II})$ pass on to quotients and subspaces. The first two claims follow directly from Theorem 3.11 and the proof of Proposition 3.9 in \cite{Tal}. 

To check (iii), it follows from the considerations in \cite{Tal} that, if $\X$ has the countable separation property, then for each nested sequence of closed subspaces $\{Z_{\alpha}\}_{\alpha<\kappa}$ with $\bigcap_{\alpha<\kappa}Z_{\alpha}=\{0\}$, 
it follows that $Z_{\alpha}=\{0\}$ already for some $\alpha<\kappa$.

In (iv) we begin by checking that if $\{x_{\alpha}\}_{\alpha<\kappa}$ is $\sigma(\X,\Z)$-null, then it is $\SD$. Suppose that $x\in \bigcap_{\beta<\kappa}[x_{\alpha}:\ \beta<\alpha<\kappa],\ x\neq 0$. 
Then there is $f\in \Z$ such that $f(x)\neq 0$. On the other hand, if $\{x_{\alpha}\}_{\alpha<\kappa}$ is $\sigma(\X,\Z)$-null 
then, due to reularity of $\kappa$, there is $\beta<\kappa$ such that $f(x_{\alpha})=0$ for $\beta<\alpha<\kappa$.
This contradicts the choice of $f$ and $x$.
 
To check that $\SD$ implies weakly null, let $\{x_{\alpha}\}_{\alpha<\kappa}\subset\X$ be $\SD$ and fix $f\in \X^{\ast}$.
Since $\left(\bigcap_{\beta<\kappa}[x_{\alpha}:\ \beta<\alpha<\kappa]\right)^{\bot}=\X^{\ast}$, we obtain by using $(\mathrm{II})$ that there is $\beta<\kappa$ with
$f\in [x_{\alpha}:\ \beta<\alpha<\kappa]^{\bot}$. This reads $f(x_{\alpha})=0$ for $\alpha\in (\beta , \kappa)$.

\end{proof}

\begin{corollary}
Let $\X$ be a Banach space with Corson's property $(C)$. If a sequence $\{x_{\alpha}\}_{\alpha<\kappa}\subset\X$ is $\sigma(\X,\Z)$-null for some separating subspace $\Z\subset\X^{\ast}$, 
then it is already weakly null.
\end{corollary}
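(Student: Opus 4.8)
The plan is to fix an arbitrary $g\in\X^{\ast}$ and to prove the (a priori stronger) statement that $g$ vanishes on $x_{\alpha}$ for all sufficiently large $\alpha<\kappa$; since $g$ is arbitrary, this yields weak nullity. The whole point is to use property $(C)$ precisely in order to replace the separating set $\Z$ by a \emph{countable} subfamily of functionals, after which the regularity of $\kappa$ lets us pass to a single threshold.

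First I would record that, since $\Z$ is a separating subspace, $\overline{\Z}^{\omega^{\ast}}=\X^{\ast}$, so in particular $g\in\overline{\Z}^{\omega^{\ast}}$. Applying Pol's reformulation $(C')$ with $A=\Z$ and $f=g$ yields a countable set $\{f_{n}\}_{n\in\N}\subset\Z$ with $g\in\overline{\conv}^{\omega^{\ast}}(\{f_{n}:n\in\N\})$. Next, for each fixed $n$ the sequence $\{x_{\alpha}\}_{\alpha<\kappa}$ is $\sigma(\X,\Z)$-null and $f_{n}\in\Z$, so exactly as in the first part of the proof of Proposition \ref{prop1}(iv) the set $\{\alpha:f_{n}(x_{\alpha})\neq 0\}$ is a countable union of bounded subsets of $\kappa$, hence bounded; choose $\beta_{n}<\kappa$ with $f_{n}(x_{\alpha})=0$ for all $\alpha\in(\beta_{n},\kappa)$.

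Now set $\beta=\sup_{n}\beta_{n}$, which satisfies $\beta<\kappa$ because $\kappa$ has uncountable cofinality. For every $\alpha\in(\beta,\kappa)$ the evaluation functional $h\mapsto h(x_{\alpha})$ on $\X^{\ast}$ is linear and $\omega^{\ast}$-continuous; it vanishes at each $f_{n}$, hence on $\conv(\{f_{n}\})$, and therefore on the $\omega^{\ast}$-closure of this convex hull, which contains $g$. Thus $g(x_{\alpha})=0$ for all $\alpha>\beta$, so $g(x_{\alpha})\to 0$, and as $g$ was arbitrary the sequence is weakly null.

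The only genuinely delicate point is the passage from the separating set $\Z$ to countably many functionals: pure separation is not enough, since merging uncountably many thresholds $\beta_{n}$ need not stay below $\kappa$. Property $(C)$, in Pol's convex-hull form $(C')$, is exactly what supplies the countable reduction, and the regularity of $\kappa$ then does the rest; the $\omega^{\ast}$-continuity and convexity step is routine. I would also note the quicker, less self-contained route: since $(C)\iff(C')\implies(\mathrm{II})$, the space satisfies $(\mathrm{II})$, and the conclusion then drops out directly from Proposition \ref{prop1}(iv).
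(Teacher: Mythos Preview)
Your proof is correct. The paper itself gives no argument beyond the bare \qed: the intended route is precisely the ``quicker'' one you mention at the end, namely $(C)\Leftrightarrow(C')\Rightarrow(\mathrm{II})$ and then Proposition~\ref{prop1}(iv) applies directly (the chain $\sigma(\X,\Z)$-null $\Rightarrow$ SD $\Rightarrow$ weakly null is proved there).

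Your main argument takes a slightly different and more self-contained path: rather than passing through the $\SD$ notion and condition $(\mathrm{II})$, you apply Pol's $(C')$ directly to the single functional $g$ to extract countably many $f_n\in\Z$, use regularity of $\kappa$ to get a common threshold, and then invoke $\omega^{\ast}$-continuity of the evaluation $h\mapsto h(x_{\alpha})$ to transfer vanishing from the $f_n$'s to $g$. This has the virtue of making transparent exactly where property $(C)$ enters (the countable reduction) and where regularity of $\kappa$ enters (merging the thresholds), without the detour through the subspace-chain condition $(\mathrm{II})$. The paper's route is shorter only because the work has already been packaged into Proposition~\ref{prop1}; conceptually the two arguments are close cousins, since the proof that $(C')\Rightarrow(\mathrm{II})$ is essentially the same $\omega^{\ast}$-continuity-of-evaluation trick applied to a kernel intersection.
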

\qed

\begin{proposition}\label{prop: linde}
Let $\X$ be a Banach space and assume that the Lindel\"{o}f number of $\X$ in its weak topology is less than $\kappa$. Then each $\SD$ sequence of length $\kappa$ is weakly null. 
\end{proposition}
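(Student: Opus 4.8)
The plan is to show directly that the net $\{x_{\alpha}\}_{\alpha<\kappa}$ converges to $0$ in the weak topology, i.e.\ that for every weakly open $U\ni 0$ the set $\{\alpha<\kappa:\ x_{\alpha}\notin U\}$ is bounded in $\kappa$. The structural input from the $\SD$ hypothesis is encoded in the nested decreasing chain $Z_{\gamma}=[x_{\alpha}:\ \gamma<\alpha<\kappa]$, $\gamma<\kappa$: each $Z_{\gamma}$ is a norm-closed subspace, hence weakly closed, and by strong dispersedness $\bigcap_{\gamma<\kappa}Z_{\gamma}=\{0\}$. Consequently any vector lying in the weak closure of every tail $T_{\gamma}=\{x_{\alpha}:\ \gamma<\alpha<\kappa\}$ must be $0$, since $\overline{T_{\gamma}}^{\,w}\subset\overline{Z_{\gamma}}^{\,w}=Z_{\gamma}$ for each $\gamma$.

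The topological tool I would invoke is the classical characterization of the Lindel\"{o}f number through complete accumulation points: if the Lindel\"{o}f number of $(\X,w)$ is less than the regular cardinal $\kappa$, then every $A\subset\X$ with $|A|=\kappa$ has a weak complete accumulation point, that is, a point $y$ such that $|V\cap A|=\kappa$ for every weak neighbourhood $V$ of $y$. This is elementary: were there no such point, every $x\in\X$ would have a weak neighbourhood meeting $A$ in fewer than $\kappa$ points, a cover which by hypothesis admits a subcover of size less than $\kappa$, whence $|A|<\kappa$ by regularity of $\kappa$, a contradiction (see \cite{En}).

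I would then argue by contradiction. Suppose $\{x_{\alpha}\}$ is not weakly null, so there is a weakly open $U\ni 0$ for which $S=\{\alpha<\kappa:\ x_{\alpha}\notin U\}$ is cofinal, hence $|S|=\kappa$ by regularity. Put $A=\{x_{\alpha}:\ \alpha\in S\}$. First I check $|A|=\kappa$: otherwise, by regularity some normalized vector would equal $x_{\alpha}$ for cofinally many $\alpha$, placing it in every $Z_{\gamma}$ and hence in $\bigcap_{\gamma}Z_{\gamma}=\{0\}$, contradicting normalization. Applying the tool to $A$ yields a weak complete accumulation point $y$. For any $\gamma<\kappa$ and any weak neighbourhood $V$ of $y$, the set $\{\alpha\in S:\ x_{\alpha}\in V\}$ has size $\kappa$, so it contains some $\alpha>\gamma$; thus $y\in\overline{T_{\gamma}}^{\,w}\subset Z_{\gamma}$ for every $\gamma$, and therefore $y=0$ by the first paragraph. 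But then $U$ is a weak neighbourhood of $y=0$, so $|U\cap A|=\kappa$, whereas $U\cap A=\emptyset$ by the definition of $S$, the desired contradiction.

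The only genuinely delicate points are the complete-accumulation-point characterization (where the regularity of $\kappa$ is used to bound the union in the cover argument) and the passage from ``$y$ is a complete accumulation point of the cofinal subfamily'' to ``$y$ lies in the weak closure of every tail''; both hinge on the same cofinality bookkeeping, namely that a subset of $\kappa$ of size $\kappa$ meets every tail of $\kappa$. I expect this bookkeeping, rather than any Banach-space input, to be the main thing to get right.
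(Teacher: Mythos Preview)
Your argument is correct and essentially the same as the paper's: both assume failure of weak nullity, pass to a cofinal ``bad'' subfamily, use the Lindel\"{o}f hypothesis to produce a weak cluster/complete accumulation point, place it in every tail space $Z_{\gamma}$, conclude it is $0$, and reach a contradiction. The only cosmetic differences are that the paper first reduces to a single functional $f$ with $f(x_{\alpha_{\beta}})\geq c>0$ rather than working with a general weak neighbourhood, and one small slip on your side: the sequence is not assumed normalized, so the contradiction in your $|A|=\kappa$ step should be phrased as $v=0\in U$ versus $v\in A\subset \X\setminus U$ rather than via normalization.
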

\begin{proof}
Let $\{x_{\alpha}\}_{\alpha<\kappa}\subset \X$ be a $\SD$ sequence. Assume to the contrary that it is not weakly null.
Then there exists by the regularity of $\kappa$ a functional $f\in \X^{\ast}$, a constant $c>0$ and a subsequence $\{x_{\alpha_{\beta}}\}_{\beta<\kappa}$ such that $f(x_{\alpha_{\beta}})\geq c$ for $\beta<\kappa$.
By using the assumption about the Lindel\"{o}f number of $\X$ we get that $\{x_{\alpha_{\beta}}\}_{\beta<\kappa}$
has a weak cluster point $x\in \X$. It follows that $x\in \bigcap_{\gamma<\kappa}[x_{\alpha}:\ \gamma<\alpha<\kappa]$.
Necessarily $f(x)\geq c$, in particular $x\neq 0$, and thus we have a contradiction. 
\end{proof}

\begin{lemma}\label{lm: angle}
Let $\X$ be a Banach space, $\Y\subset\X$ a closed subspace with $\dens(\Y)<\kappa$, $\kappa$ an uncountable regular cardinal, and let $\{Z_{\alpha}\}_{\alpha<\kappa}$ be a nested sequence of closed subspaces of $\X$ with trivial intersection.
Suppose that 
\[\bigcap_{\alpha<\kappa}\overline{\B_{\X}+Z_{\alpha}}\subset r \B_{\X}\]
for some $1\leq r<\infty$. Then there exists $\beta<\kappa$ such that the angle between $\Y$ and $Z_{\beta}$, $\dist(\S_{\Y},Z_{\beta})\geq 1/r$.
\end{lemma}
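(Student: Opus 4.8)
The plan is to reduce the geometric conclusion to a single pointwise inequality and then to \emph{uniformize} that inequality over the unit sphere $\S_\Y$ using the density hypothesis and the regularity of $\kappa$. First I would record the preliminary reformulation. Because $Z_\alpha\supseteq Z_\beta$ for $\alpha\le\beta$, the map $\alpha\mapsto\dist(y,Z_\alpha)$ is nondecreasing for each fixed $y$, so its supremum equals its limit. I claim the hypothesis $\bigcap_{\alpha<\kappa}\overline{\B_\X+Z_\alpha}\subset r\B_\X$ is equivalent to $\sup_{\alpha<\kappa}\dist(y,Z_\alpha)\ge 1/r$ for every $y\in\S_\Y$. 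Indeed, if some $y\in\S_\Y$ had $S:=\sup_\alpha\dist(y,Z_\alpha)<1/r$, I would choose $t\in(S,1/r)$ and note that $\dist(y/t,Z_\alpha)=\dist(y,Z_\alpha)/t\le S/t<1$ for every $\alpha$, so that $y/t\in\B_\X+Z_\alpha\subset\overline{\B_\X+Z_\alpha}$ for all $\alpha$; the hypothesis would then force $\|y/t\|\le r$, i.e. $t\ge 1/r$, a contradiction. (Here the strict distance $<1$ lets me avoid any closure subtlety.)

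Next I would convert this ``for each $y$ some $\alpha$ works'' into ``some $\beta$ works for all $y$ at once'', at the cost of an arbitrarily small loss. Fix $\eta>0$ and a dense set $D\subset\S_\Y$ with $\card(D)=\dens(\Y)<\kappa$. For each $d\in D$ the previous step together with monotonicity in $\alpha$ yields an ordinal $\gamma_d<\kappa$ with $\dist(d,Z_\beta)>1/r-\eta$ for all $\beta\ge\gamma_d$. Since $\card(D)<\kappa$ and $\kappa$ is regular, $\gamma(\eta):=\sup_{d\in D}\gamma_d<\kappa$. As $z\mapsto\dist(z,Z_\beta)$ is $1$-Lipschitz, approximating an arbitrary $y\in\S_\Y$ by some $d\in D$ with $\|y-d\|<\eta$ gives $\dist(y,Z_\beta)\ge\dist(d,Z_\beta)-\|y-d\|>1/r-2\eta$ for every $\beta\ge\gamma(\eta)$; hence $\dist(\S_\Y,Z_\beta)\ge 1/r-2\eta$ for all such $\beta$.

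The delicate point — which I expect to be the main obstacle — is removing the loss and reaching the \emph{sharp, non-strict} bound $1/r$ rather than merely $1/r-\eta$: for a single fixed $\beta$ it is entirely possible that $\dist(y,Z_\beta)<1/r$ for every $y$, since the supremum over $\alpha$ need not be attained. This is resolved precisely by the interplay between countable approximation and uncountable regularity stressed in the introduction. Applying the previous step with $\eta=1/n$ produces ordinals $\gamma(1/n)<\kappa$, and since $\kappa$ is \emph{uncountable} and regular the countable supremum $\gamma^\ast:=\sup_{n}\gamma(1/n)$ is still $<\kappa$. Taking $\beta=\gamma^\ast$ (indeed any $\beta\in[\gamma^\ast,\kappa)$) we get $\dist(\S_\Y,Z_\beta)\ge 1/r-2/n$ for every $n\in\N$, whence $\dist(\S_\Y,Z_\beta)\ge 1/r$, as asserted. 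Note that $\dens(\Y)<\kappa$ is used exactly to keep $\gamma(\eta)$ below $\kappa$, while uncountable regularity enters twice: once to bound the density-indexed supremum, and once to absorb the countable limit $\eta\to 0$ without escaping $\kappa$.
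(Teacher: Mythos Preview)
Your proof is correct, but it takes a genuinely different route from the paper's. The paper argues by contradiction: assuming $\lim_{\alpha\to\kappa}\dist(\S_\Y,Z_\alpha)<1/r$, it picks a sequence $\{y_\alpha\}\subset r\S_\Y$ witnessing this gap, invokes the Lindel\"{o}f number of $r\S_\Y$ (which is $<\kappa$ since $\dens(\Y)<\kappa$) to obtain a cluster point $y\in r\S_\Y$, and shows that $y$ lands in $\bigcap_\alpha\overline{(1-\epsilon)\B_\X+Z_\alpha}\subset(1-\epsilon)r\B_\X$, contradicting $\|y\|=r$. Your argument is instead direct and purely metric: you first extract the pointwise inequality $\sup_\alpha\dist(y,Z_\alpha)\ge 1/r$, then uniformize it over a dense set of size $<\kappa$ via the $1$-Lipschitz property of $\dist(\cdot,Z_\beta)$, and finally absorb the $\eta$-loss by a second, countable, supremum. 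What your approach buys is that it avoids any appeal to Lindel\"{o}f numbers or cluster-point topology; what the paper's approach buys is that it packages the two uses of regularity into a single cluster-point step. One minor imprecision: you state the pointwise inequality is \emph{equivalent} to the hypothesis, but you only prove (and only need) the forward implication --- the converse would require the inequality for all $y\in\S_\X$, not just $\S_\Y$.
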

\begin{proof}
First, observe that according to the assumption  
\begin{eqnarray*}
\bigcap_{\alpha<\kappa}\overline{(1-\epsilon)r^{-1}\B_{\X}+Z_{\alpha}}&=&\bigcap_{\alpha<\kappa}\overline{(1-\epsilon)r^{-1}\B_{\X}+(1-\epsilon)r^{-1}Z_{\alpha}}\\
&=&(1-\epsilon)r^{-1}\bigcap_{\alpha<\kappa}\overline{\B_{\X}+Z_{\alpha}}\subset(1-\epsilon)\B_{\X},
\end{eqnarray*}
for $0\leq\epsilon\leq 1$. Assume to the contrary that 
$\lim_{\alpha\to \kappa}\dist(\S_{\Y},Z_{\alpha})=r^{-1}(1-2\epsilon)$ for some $\epsilon>0$. This reads
$\lim_{\alpha\to \kappa}\dist(r\S_{\Y},Z_{\alpha})=1-2\epsilon$. 
Then there is a sequence $\{y_{\alpha}\}_{\alpha<\kappa}\subset r\S_{\Y}$ such that 
$\dist(y_{\alpha},Z_{\alpha})<1-\epsilon$ for sufficiently large ordinals $\alpha<\kappa$.
Since the Lindel\"{o}f number of $r\S_{\Y}$ is less than $\kappa$, we obtain that 
$\bigcap_{\beta<\kappa}\overline{\{y_{\alpha}:\ \beta<\alpha<\kappa\}}\neq \emptyset$ and pick $y$ from this set. 
Observe that  
\[y\in \bigcap_{\alpha<\kappa}\overline{(1-\epsilon)\B_{\X}+Z_{\alpha}}\subset (1-\epsilon)r\B_{\X}.\] 
Thus, we arrive at a contradiction, since $y\in r\S_{\Y}$.
\end{proof}

It is perhaps worthwhile to observe closely in the arguments the intertwined roles of the structure of Banach spaces, the Lindel\"{o}f number of subsets and the length of the chains. 

\subsection{Banach subspaces analogy for comeager sets of a Baire space}
\ \\
Suppose that $\X$ is a Banach space and $\Y\subset \X$ is a closed subspace. We will denote 
$\mathrm{codens}(\Y)=\mathrm{dens}(\X\mod \Y)$, provided that the superspace $\X$ is understood. 
If $\mathrm{codens}(\Y)=\omega$, we shall say that $\Y$ is \emph{coseparable}.
This concept can be motivated by its analogy with comeager sets in topology.
Namely, if the superspace $\X$ is nonseparable, then the coseparable subspaces $\Y$ are fat in a sense.  
Recall that comeager subspaces of a Baire space are preserved in countable intersections, and one might ask if the 
same holds for coseparable subspaces of a nonseparable Banach space. 

For example, if $\X$ is reflexive and $(\Y_{n})$ is a countable sequence of coseparable subspaces, 
then $\bigcap_{n}\Y_{n}$ is coseparable. This can be seen by using well-known properties of reflexive spaces as follows: 
\[\X\Big{/} \bigcap_{n}\Y_{n}=\X^{\ast\ast}\Big{/} \bigcap_{n}\Y_{n}^{\ast\ast}=\left(\overline{\span}\bigcup \Y_{n}^{\bot} \right)^{\ast},\]
where the annihilators $\Y_{n}^{\bot}$ are separable due to coseparability. This argument can be modified to cover the more 
general case, where $\X$ is coseparable in its bidual. 

However, in general Banach spaces coseparability is not necessarily stable even in finite intersections. For example, let $\X=\ell^{2}\oplus \ell^{\infty}$, 
\[\Y_{1}=\{(x_{1},0,2^{-2}x_{2},0,3^{-2}x_{3},0,\ldots)\times (x_{1},x_{2},x_{3},\ldots)\in \X:\ \}\] 
and
\[\Y_{2}=\{(0,x_{1},0,2^{-2}x_{2},0,3^{-2}x_{3},\ldots)\times (x_{1},x_{2},x_{3},\ldots)\in \X:\ \}.\] 
Observe that $(\ell^{2}\oplus \{0\})+\Y_{1}=(\ell^{2}\oplus \{0\})+\Y_{2}=\ell^{2}\oplus \ell^{\infty}$. For this reason 
$\X\mod \Y_{1}$ and $\X\mod \Y_{2}$ are separable by using the continuity of the respective quotient mappings. Note that $\Y_{1}\cap \Y_{2}=\{0\}\subset \X$ and $\codens(\{0\})=2^{\omega}$. Essentially the same argument yields the following observation.
\begin{proposition}
Let $\Z$ be a separable space and $\X$ a space with $\omega^{\ast}$-separable dual.
Then there exist coseparable subspaces $Y_{1},Y_{2}\subset \Z\oplus\X$, whose intersection is $\{0\}$. This trivial intersection is not coseparable if $\X$ is nonseparable. 
In particular, if $K$ is any separable, non-metrizable compactum, then $c_{0}\oplus C(K)$ contains two coseparable subspaces with trivial intersection, which is not coseparable. 
\end{proposition}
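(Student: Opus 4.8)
The plan is to realize both subspaces as graphs, exactly as in the displayed $\ell^{2}\oplus\ell^{\infty}$ example, the only new ingredient being the construction of a suitable injection into the abstract space $\Z$. First I would record the two routine facts that drive the argument. If $S\colon\X\to\Z$ is any bounded operator, then its graph $Y_{1}=\{(Sx,x):x\in\X\}$ is a closed subspace (the graph of a bounded operator is closed) satisfying $(\Z\oplus\{0\})+Y_{1}=\Z\oplus\X$, because $(z,x)=(z-Sx,0)+(Sx,x)$; hence the quotient map sends the separable space $\Z\oplus\{0\}$ onto $(\Z\oplus\X)/Y_{1}$, so $Y_{1}$ is coseparable. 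Likewise $Y_{2}=\{0\}\oplus\X$ is coseparable, since $(\Z\oplus\X)/Y_{2}\cong\Z$ is separable. Finally $Y_{1}\cap Y_{2}=\{(Sx,x):Sx=0\}$, which is $\{0\}$ precisely when $S$ is injective. (One could equally use two graphs whose difference is injective, mirroring the example, but this is cosmetic.)

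So everything reduces to producing a bounded injective $S\colon\X\to\Z$, and here the hypotheses enter. The assumption that $\X^{\ast}$ is $\omega^{\ast}$-separable is equivalent to the existence of a countable separating family $\{f_{n}\}\subset\X^{\ast}$: a countable $\omega^{\ast}$-dense set is automatically separating, and conversely a countable separating set has $\omega^{\ast}$-dense linear span. Assuming, as one must, that $\Z$ is infinite-dimensional, I would pick a normalized basic sequence $(z_{n})\subset\Z$ (every infinite-dimensional Banach space contains one) and set
\[
Sx=\sum_{n}c_{n}f_{n}(x)\,z_{n},\qquad c_{n}=2^{-n}(1+\|f_{n}\|)^{-1}.
\]
The series converges absolutely with $\sum_{n}c_{n}|f_{n}(x)|\leq\|x\|$, so $S$ is bounded; and if $Sx=0$, then uniqueness of the basis expansion forces $c_{n}f_{n}(x)=0$ for every $n$, whence $f_{n}(x)=0$ for all $n$ and $x=0$ by separation. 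This operator is the crux: landing inside the prescribed $\Z$ (rather than in a convenient space such as $\ell^{2}$ or $c_{0}$) is exactly what forces the use of a basic sequence, and I expect this to be the only genuinely nontrivial step.

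It then remains to see that the trivial intersection is not coseparable when $\X$ is nonseparable, which is immediate from the density character: $\codens(\{0\})=\dens(\Z\oplus\X)=\max(\dens\Z,\dens\X)=\dens\X>\aleph_{0}$, since $\Z$ is separable. For the concrete consequence I would take $\Z=c_{0}$ and $\X=C(K)$ with $K$ separable and non-metrizable; then $C(K)$ is nonseparable while $C(K)^{\ast}$ is $\omega^{\ast}$-separable, the evaluations $\{\delta_{k_{m}}\}$ at a countable dense set $\{k_{m}\}\subset K$ forming a separating family. In this case the abstract $S$ can be written down explicitly as $f\mapsto(2^{-m}f(k_{m}))_{m}\in c_{0}$, recovering the flavour of the $\ell^{2}\oplus\ell^{\infty}$ example.
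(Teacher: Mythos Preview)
Your proof is correct and follows essentially the same graph-construction idea as the paper's $\ell^{2}\oplus\ell^{\infty}$ example (the paper gives no separate proof, only the remark ``essentially the same argument''). The one cosmetic difference is that the paper's example realises \emph{both} $Y_{1}$ and $Y_{2}$ as graphs of injective operators (using disjoint coordinate slots in $\ell^{2}$), whereas you use one graph together with $\{0\}\oplus\X$; as you yourself note, this simplification is harmless since $Y_{1}\cap(\{0\}\oplus\X)=\{0\}$ already when $S$ is injective.
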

\qed

It is not hard to check the following fact by using the Hahn-Banach theorem.
\begin{proposition}
The coseparable subspaces of $\X$ are preserved in countable intersections if and only if the following condition holds: 
\begin{enumerate}
\item[($\sigma$)]{Given any $\omega^{\ast}$-closed, $\omega^{\ast}$-separable subspace $\Z\subset\X^{\ast}$, then 
$\mathrm{codens}(\Z_{\bot})=\omega$.} 
\end{enumerate}
\end{proposition}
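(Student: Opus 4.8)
The plan is to reduce everything to the annihilator duality between closed subspaces of $\X$ and $\omega^{\ast}$-closed subspaces of $\X^{\ast}$. Via the Hahn-Banach theorem one has, for any closed subspace $\Y\subset\X$, the isometric identification $(\X\mod\Y)^{\ast}=\Y^{\bot}$, together with the bipolar identities $(\Y^{\bot})_{\bot}=\Y$ for norm-closed $\Y$ and $(\Z_{\bot})^{\bot}=\Z$ for $\omega^{\ast}$-closed $\Z$. Moreover the quotient map induces an $\omega^{\ast}$-homeomorphism of $(\X\mod\Y)^{\ast}$ onto $\Y^{\bot}$, so that, since a separable Banach space has $\omega^{\ast}$-separable dual, coseparability of $\Y$ automatically forces $\Y^{\bot}$ to be $\omega^{\ast}$-separable. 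Under the correspondence $\Y\leftrightarrow\Z=\Y^{\bot}$, condition $(\sigma)$ is exactly the converse implication, namely that $\omega^{\ast}$-separability of $\Y^{\bot}$ returns coseparability of $\Y$.

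For the implication $(\sigma)\Rightarrow$ preservation, I would start from coseparable subspaces $\{\Y_{n}\}$ and choose, for each $n$, a countable $\omega^{\ast}$-dense set $D_{n}\subset\Y_{n}^{\bot}$ (available since each $\Y_{n}^{\bot}$ is $\omega^{\ast}$-separable). Put $\Z=\overline{\span}^{\omega^{\ast}}\bigl(\bigcup_{n}D_{n}\bigr)$. The rational linear combinations of $\bigcup_{n}D_{n}$ form a countable set that is norm-dense, hence $\omega^{\ast}$-dense, in $\Z$, so $\Z$ is $\omega^{\ast}$-closed and $\omega^{\ast}$-separable; and since $D_{n}$ is $\omega^{\ast}$-dense in the $\omega^{\ast}$-closed space $\Y_{n}^{\bot}$, one gets $\Z=\overline{\span}^{\omega^{\ast}}\bigl(\bigcup_{n}\Y_{n}^{\bot}\bigr)=\bigl(\bigcap_{n}\Y_{n}\bigr)^{\bot}$ by the bipolar identities. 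Applying $(\sigma)$ to $\Z$ and using $\Z_{\bot}=\bigl((\bigcap_{n}\Y_{n})^{\bot}\bigr)_{\bot}=\bigcap_{n}\Y_{n}$ then yields $\codens\bigl(\bigcap_{n}\Y_{n}\bigr)=\omega$.

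For the reverse implication, given an $\omega^{\ast}$-closed, $\omega^{\ast}$-separable subspace $\Z$, I would fix a countable $\omega^{\ast}$-dense sequence $\{f_{n}\}\subset\Z$ and verify that $\Z_{\bot}=\bigcap_{n}\ker f_{n}$. The inclusion $\subset$ is immediate; for $\supset$ one notes that each $f\in\Z$ is an $\omega^{\ast}$-limit of the $f_{n}$, so $f_{n}(x)=0$ for all $n$ forces $f(x)=0$. Each $\ker f_{n}$ is a hyperplane (or all of $\X$), hence trivially coseparable, and preservation of coseparability under countable intersections delivers $\codens(\Z_{\bot})=\omega$, which is exactly $(\sigma)$.

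The arguments are short, and I expect no genuine obstacle: the only points demanding care are the three duality computations, namely the identity $\Z_{\bot}=\bigcap_{n}\Y_{n}$ (equivalently $\Z=(\bigcap_{n}\Y_{n})^{\bot}$), the $\omega^{\ast}$-separability of the $\omega^{\ast}$-closed span of a countable union of $\omega^{\ast}$-separable subspaces, and the $\omega^{\ast}$-density step giving $\Z_{\bot}=\bigcap_{n}\ker f_{n}$. Each is a routine consequence of Hahn-Banach and the bipolar theorem. The real content of the proposition is the structural observation that, modulo countable intersections, the coseparable subspaces are precisely the pre-annihilators of $\omega^{\ast}$-closed, $\omega^{\ast}$-separable subspaces, with the hyperplanes $\ker f$ serving as the generating coseparable examples.
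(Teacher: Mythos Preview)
Your proof is correct and is precisely the Hahn--Banach/bipolar argument the paper has in mind; the paper itself records the proposition without detailed proof, noting only that it follows from the Hahn--Banach theorem, and your write-up is a careful unpacking of exactly that remark.
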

\qed

\noindent The condition ($\sigma$) can also be viewed as a reverse Asplund property.

Yet another way to describe this class of Banach spaces would be to say that they are saturated with separable quotients. 
Recall the well-known separable quotient problem, which asks whether each nonseparable Banach space 
has a separable infinite-dimensional quotient. Observe that the finite-codimensional subspaces are a fortiori 
coseparable, and thus the separable quotient problem has a positive answer in the class of Banach spaces $\X$ 
satisfying ($\sigma$). In fact, ($\sigma$) states that this happens in a very strong way, and by using the Hahn-Banach extension of functionals one can see that also every infinite-dimensional subspace of $\X$ has a separable infinite-dimensional quotient.

\section{Main results}

Next, we will give the main result.

\begin{theorem}\label{thm: first_main}
Let $\X$ be a Banach space and $\{x_{\alpha}\}_{\alpha<\kappa}$ be a dispersed sequence of $\X$. 
Then one can extract increasing subsequences $\{\alpha_{\sigma}\}_{\sigma<\kappa}\subset \kappa$ as follows.
\begin{enumerate}
\item[(1)]{Suppose that $\X$ satisfies $(\mathrm{I})$ (resp. $(\mathrm{II})$). Then there is a bounded (resp. $1$-bounded) biorthogonal sequence $\{x_{\alpha_{\sigma}}\}_{\sigma<\kappa}$.}
\item[(2)]{If $\X$ satisfies $(\mathcal{B})$ and $\{x_{\alpha}\}_{\alpha<\kappa}$ is $\SD$, then there is a basic sequence $\{x_{\alpha_{\sigma}}\}_{\sigma<\kappa}$.}
\item[(3)]{If $\Z\subset\X^{\ast}$ is a norming subspace such that $\{x_{\alpha}\}_{\alpha<\kappa}$ is $\sigma(\X,\Z)$-null, then there exists a basic sequence $\{x_{\alpha_{\sigma}}\}_{\sigma<\kappa}$.}
\end{enumerate}
If, additionally, $\X$ satisfies $(1$-$\mathcal{B})$ in $(2)$, or $\Z$ is $1$-norming in $(3)$, then the basic sequence can be chosen to be monotone. 
\end{theorem}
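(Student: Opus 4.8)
The plan is to treat (2) as the core and reduce the other parts to it. Throughout I build the subsequence by transfinite recursion on $\sigma<\kappa$, maintaining the initial subspace $E_\sigma:=[x_{\alpha_\tau}:\tau<\sigma]$, whose density is at most $|\sigma|<\kappa$ by regularity of $\kappa$. The guiding idea is to produce at each stage a single ``one-shot'' estimate controlling $E_\sigma$ against the \emph{entire} remaining tail, rather than against one new vector at a time; this is essential, since a step-by-step almost-orthogonality with positive errors cannot be telescoped over $\kappa$-many stages (an uncountable product of factors $>1$ diverges), whereas one uniform estimate per stage yields the basic constant directly. For (2) the tail subspaces $Z_\gamma:=[x_\alpha:\gamma<\alpha<\kappa]$ form a decreasing chain with $\bigcap_\gamma Z_\gamma=\{0\}$ because the sequence is $\SD$, and $(\mathcal{B})$ says exactly that $\bigcup_\gamma Z_\gamma^{\bot}$ is norming, i.e. $(r$-$\mathcal{B})$ holds for some finite $r$. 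I then apply Lemma~\ref{lm: angle} with $\Y=E_\sigma$ to obtain $\beta_\sigma<\kappa$ with $\dist(\S_{E_\sigma},Z_{\beta_\sigma})\ge 1/r$; enlarging $\beta_\sigma$ past all previously used ordinals only increases this distance, and I pick $\alpha_\sigma>\beta_\sigma$ with $x_{\alpha_\sigma}\neq0$ (possible as $Z_{\beta_\sigma}\neq\{0\}$ by dispersedness). Since every later index exceeds $\beta_\sigma$, the whole later span lies in $Z_{\beta_\sigma}$, giving $\|y\|\le r\|y+z\|$ for $y\in E_\sigma$ and $z\in[x_{\alpha_\tau}:\tau\ge\sigma]$: this is the basic inequality with constant $r$, with no telescoping.

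The monotone refinement of (2) is immediate from this scheme: under $(1$-$\mathcal{B})$ one has $(r$-$\mathcal{B})$ with $r=1$, so Lemma~\ref{lm: angle} yields $\dist(\S_{E_\sigma},Z_{\beta_\sigma})\ge 1$ (hence $=1$), and the same selection gives $\|y\|\le\|y+z\|$, i.e. a monotone basic sequence. Part (1) I would reduce to (2) by passing to the quotient $q\colon\X\to\X/Z_\infty$, where $Z_\infty:=\bigcap_\gamma Z_\gamma$: dispersedness is precisely the statement that the images make the quotiented tail chain have trivial intersection, so $\{q(x_\alpha)\}$ is $\SD$ in $\X/Z_\infty$, and by Proposition~\ref{prop1} condition $(\mathrm{I})$ (resp. $(\mathrm{II})$) of $\X$ gives $(\mathcal{B})$ (resp. $(1$-$\mathcal{B})$) for the quotient. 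Applying (2) downstairs produces a (monotone) basic sequence $\{q(x_{\alpha_\sigma})\}$; lifting the basis functionals through $q^{\ast}$ (they live in $Z_\infty^{\bot}\subset\X^{\ast}$) turns this into a bounded, and in the monotone case $1$-bounded, biorthogonal sequence upstairs, since a basic sequence is uniformly minimal with a constant governed by its basis constant.

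For (3) the sequence is again $\SD$ (the remark after the definition, as a norming $\Z$ is separating), so the scaffold is the same, but the structural input is now only the norming $\Z$ with $\sigma(\X,\Z)$-nullity. At stage $\sigma$ I would take a $\dens(E_\sigma)$-sized (hence $<\kappa$) net of $\S_{E_\sigma}$ and, using that $\Z$ is $\rho$-norming, choose functionals in $\Z$ that $\rho$-norm the net; by $\sigma(\X,\Z)$-nullity together with the regularity of $\kappa$, all these fewer-than-$\kappa$ functionals are eventually small on the tail, which pins down an ordinal past which $\alpha_\sigma$ is chosen. The main obstacle lies exactly here: unlike in (2), a functional of $\Z$ that is small on the individual tail vectors need not be small in norm on their closed span, so one cannot simply pick all later indices in one tail $Z_\gamma$ and conclude a one-shot bound. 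Resolving this is the heart of the argument: I would try to manufacture, via $w^{\ast}$-compactness of $\B_{\X^{\ast}}$ and the Lindel\"of/tightness phenomena stressed in the introduction, cluster functionals that norm $E_\sigma$ while annihilating the closed span of the \emph{subsequently chosen} vectors, thereby recovering a genuine one-shot estimate; for $1$-norming $\Z$ this control must moreover be sharp, which is what upgrades the basic sequence to a monotone one. This interplay---obtaining uniform, non-telescoping tail control from $\Z$ at the nonseparable limit stages---is the step I expect to be the most delicate.
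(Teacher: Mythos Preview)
Your treatment of (2), its monotone refinement under $(1$-$\mathcal{B})$, and the reduction of (1) to (2) via the quotient $q\colon\X\to\X/Z_\infty$ are essentially the paper's proof. The paper likewise uses Lemma~\ref{lm: angle} to bound the natural projection $[x_\alpha:\alpha\in[0,\theta]\cup[\gamma,\kappa)]\to[x_\alpha:\alpha\le\theta]$ uniformly in $\theta$ (it packages this as a non-decreasing function $\eta(\theta)$ with finite limit $C$), then sets $\Phi(\theta)$ to be the least tail ordinal realising $\|P\|\le C$ and builds the subsequence by the recursion $\alpha_\sigma=\Phi(\sup_{\gamma<\sigma}\alpha_\gamma)\vee(\sup_{\gamma<\sigma}\alpha_\gamma+1)$. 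For (1) it passes to the quotient by $Z_\infty$, invokes Proposition~\ref{prop1}, applies (2), and lifts the one-dimensional basis projections $P_{\lambda+1}-P_\lambda$ through $q^*$ to obtain the biorthogonal functionals---exactly your plan.

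Where you diverge is (3). You correctly set up the stage-$\sigma$ step by choosing fewer-than-$\kappa$ functionals in $\Z\cap\S_{\X^*}$ that $\rho$-norm $E_\sigma=[x_{\alpha_\tau}:\tau<\sigma]$, and you are right that having these functionals merely \emph{small} on the individual tail vectors would not bound them on the tail span. The paper, however, does not take your proposed detour through $w^*$-clustering or Lindel\"of arguments: it asserts directly that, by $\sigma(\X,\Z)$-convergence and regularity of $\kappa$, there is $\beta<\kappa$ with $f_\gamma(x_\alpha)=0$ for all $\alpha\ge\beta$ and all chosen $f_\gamma$. With exact vanishing each $f_\gamma$ lies in $[x_\alpha:\alpha\ge\beta]^\bot$, so for $y\in E_\sigma$ and $z$ in the span of later-chosen vectors one has $\|y\|=\sup_\gamma f_\gamma(y)=\sup_\gamma f_\gamma(y+z)\le\|y+z\|$ in the $1$-norming case (and $\|y\|\le\rho^{-1}\|y+z\|$ in general): the one-shot estimate is immediate and the monotone conclusion follows with no telescoping. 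Your anticipated obstacle is thus bypassed in the paper by this vanishing claim rather than resolved by any compactness machinery; if you read ``$\sigma(\X,\Z)$-null'' as bare net-convergence to $0$, that step does deserve scrutiny, but the paper's route is the direct one above, not the clustering argument you sketch.
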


We will give the proof shortly after some remarks. Note that if $\X$ in Theorem \ref{thm: first_main}(2) satisfies condition $(\mathrm{II})$, then $\{x_{\alpha}\}_{\alpha<\kappa}$ is already weakly null. Recall that such a sequence is $\SD$.

Observe that if $\{x_{\alpha_{\sigma}}\}_{\sigma<\kappa}\subset \X$ is not $\SD$ for any cofinal subsequence $\{\alpha_{\sigma}\}_{\sigma<\kappa}\subset\kappa$, then there clearly does not exist
an M-basic subsequence of $\{x_{\alpha}\}_{\alpha<\kappa}$ of length $\kappa$, since M-basic sequences are necessarily $\SD$.
The analogous statement holds for dispersed sequences and biorthogonal sequences.
Observe that each minimal sequence $\{x_{\alpha}\}_{\alpha<\kappa}$ contains a uniformly minimal subsequence of length
$\kappa$, since $\limsup_{\beta\to\kappa}\dist(x_{\beta},[x_{\alpha}:\ \alpha\neq \beta])>0$ by the regularity of $\kappa$. 
Thus we obtain the following dichotomy.

\begin{corollary}
Let $\X$ be a Banach space with property $(C)$ and let $\{x_{\alpha}\}_{\alpha<\kappa}\subset \X$. Then exactly one of the following conditions hold:
\begin{itemize}
\item[$\bullet$]{There is no subsequence $\{x_{\alpha_{\sigma}}\}_{\sigma<\kappa}$, which is dispersed,}
\item[$\bullet$]{There is a $1$-bounded biorthogonal sequence $\{x_{\alpha_{\sigma}}\}_{\sigma<\kappa}$,}
\end{itemize}
and exactly one of the following conditions hold:
\begin{enumerate}
\item[$\bullet$]{There is no subsequence $\{x_{\alpha_{\sigma}}\}_{\sigma<\kappa}$, which is $\SD$,}
\item[$\bullet$]{There is a weakly null monotone basic sequence $\{x_{\alpha_{\sigma}}\}_{\sigma<\kappa}$.}
\end{enumerate}
\end{corollary}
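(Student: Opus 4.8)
The plan is to establish each of the two dichotomies by verifying separately that its two alternatives are mutually exclusive and that they are jointly exhaustive; together these give ``exactly one.'' Throughout I would exploit the implications recorded just before the corollary: property $(C)$ coincides with Pol's reformulation $(C')$, so $\X$ satisfies both $(\mathrm{I})$ and $(\mathrm{II})$, and moreover, by Proposition \ref{prop1}(ii), condition $(\mathrm{II})$ forces $\X$ itself (viewed as a subspace of itself) to satisfy $(1$-$\mathcal{B})$. These upgraded conditions are what allow the extraction theorem to deliver the sharp constants claimed.

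For the first dichotomy I would argue as follows. Mutual exclusivity is immediate: a $1$-bounded biorthogonal sequence is in particular biorthogonal, and biorthogonal sequences are dispersed under any re-ordering (as noted in the preliminaries), so the existence of such a subsequence contradicts the nonexistence of dispersed subsequences. For exhaustiveness, suppose some subsequence $\{x_{\alpha_{\sigma}}\}_{\sigma<\kappa}$ is dispersed; regarded as a dispersed sequence of length $\kappa$ in a space satisfying $(\mathrm{II})$, Theorem \ref{thm: first_main}(1) applies and produces a $1$-bounded biorthogonal further subsequence. Hence the failure of the first alternative forces the second, and the two cannot hold at once.

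For the second dichotomy the reasoning is parallel. A weakly null monotone basic sequence consists of nonzero vectors and is weakly null, hence is $\SD$ (any weakly null sequence in $\X\setminus\{0\}$ is $\SD$); its existence therefore rules out the nonexistence of $\SD$ subsequences, giving mutual exclusivity. For exhaustiveness, assume some subsequence is $\SD$. Since $\X$ satisfies $(\mathrm{II})$, Proposition \ref{prop1}(iv) shows this $\SD$ subsequence is in fact weakly null; and since $\X$ satisfies $(1$-$\mathcal{B})$, Theorem \ref{thm: first_main}(2) extracts from it a monotone basic further subsequence, which, as a subsequence of a weakly null sequence, is again weakly null. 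Thus the failure of the first alternative forces the second.

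The substantive work is carried by Theorem \ref{thm: first_main} and Proposition \ref{prop1}, so the arguments are short; the only delicate point is bookkeeping about constants. The naive route through $(C)\implies(\mathrm{I})$ would yield, via Theorem \ref{thm: first_main}(1), merely a bounded biorthogonal sequence rather than the $1$-bounded one claimed. The main (mild) obstacle is therefore to recognize that property $(C)$ in fact delivers the stronger condition $(\mathrm{II})$ through the equivalence with $(C')$, which upgrades the conclusion to $1$-boundedness and, through $(1$-$\mathcal{B})$, to monotonicity. A secondary routine check is that the relevant subsequences consist of nonzero vectors, so that the cited equivalences for $\SD$ and basic sequences genuinely apply.
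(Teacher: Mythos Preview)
Your proposal is correct and follows the same route as the paper: the corollary is stated with a bare \qed, relying on the remarks immediately preceding it together with Theorem \ref{thm: first_main} and Proposition \ref{prop1}. You have made explicit precisely those ingredients---the passage from $(C)$ via $(C')$ to $(\mathrm{II})$ and then to $(1$-$\mathcal{B})$ for the sharp constants, the use of Theorem \ref{thm: first_main}(1),(2), and Proposition \ref{prop1}(iv) for weak nullity---so there is nothing to add.
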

\qed

\begin{proof}[Proof of Theorem \ref{thm: first_main} ]

We will first consider the hardest case (2), where $\X$ satisfies $(\mathcal{B})$. 
For each $\theta<\kappa$ let $\eta(\theta)$ be the infimum of numbers $C\geq 1$ such that there exists 
$\gamma<\kappa$ and a continuous linear projection 
\[P\colon [x_{\alpha}:\ \alpha\in [0,\theta]\cup [\gamma,\kappa)]\to [x_{\alpha}:\ \alpha\in [0,\theta]]\] 
given by $P(x+y)=x$ for $x\in [x_{\alpha}:\ \alpha\in [0,\theta]],\ y\in  [x_{\alpha}:\ \alpha\in  [\gamma,\kappa)]$
with $\|P\|\leq C$ (and $\eta(\theta)=\infty$ if such $P$ does not exist). Let $\epsilon>0$. Suppose that $\theta_{1}\leq\theta_{2}<\kappa$ and 
\[P_{2}\colon [x_{\alpha}:\ \alpha\in [0,\theta_{2}]\cup [\gamma_{2},\kappa)]\to [x_{\alpha}:\ \alpha\in [0,\theta_{2}]]\]  
is an admissible projection in the definition $\eta(\theta_{2})$ with $\|P_{2}\|\leq \eta(\theta_{2})+\epsilon$. Then, putting $P_{1}=P_{2}|_{[x_{\alpha}:\ \alpha\in [0,\theta_{1}]\cup [\gamma_{2},\kappa)]}$
defines a projection, which is admissible in the definition of $\eta(\theta_{1})$ and again $\|P_{1}\|\leq \eta(\theta_{2})+\epsilon$.
We conclude that $\eta\colon [0,\kappa)\to \R\cup\{\infty\}$ is a non-decreasing function.

Next, we will show that $\eta(\theta)<\infty$ for each $\theta<\kappa$ under the hypothesis $(\mathcal{B})$. 
Indeed, it follows from Lemma \ref{lm: angle} that there is $\beta<\kappa$ such that 
the angle between $[x_{\alpha}:\ \alpha\leq \theta]$ and 
$[x_{\alpha}:\ \alpha\geq \beta]$ is strictly positive, which is equivalent to the statement that there is a continuous linear projection 
\[P\colon [x_{\alpha}:\ \alpha\in [0,\theta]\cup [\beta,\kappa)]\to [x_{\alpha}:\ \alpha\leq \theta]\]
given by $P(x+y)=x$ for $x\in [x_{\alpha}:\ \alpha\in [0,\theta]],\ y\in  [x_{\alpha}:\ \alpha\in  [\beta,\kappa)]$.

Thus, the values of $\eta$ are finite. By using the the regularity of $\kappa$ and the fact that $\eta$ is non-decreasing we obtain that $\lim_{\theta\to \kappa}\eta(\theta)$ exists and is finite. Denote this limit by $1\leq C<\infty$.

Let us define an increasing sequence $\Phi\colon [0,\kappa)\to [0,\kappa)$ by letting 
$\Phi(\theta)$ be the least $\phi$ such that there is a projection 
\[P\colon [x_{\alpha}:\ \alpha\in [0,\theta]\cup [\phi,\kappa)]\to [x_{\alpha}:\ \alpha\in [0,\theta]]\] 
with $\|P\|\leq C$. Indeed, this can be accomplished by the regularity of $\kappa$.

The required basic sequence can be extracted by transfinite recursion as follows. Let $\alpha_{0}=0$ and
\[\alpha_{\sigma}=\Phi\left(\sup_{\gamma<\sigma}\alpha_{\gamma}\right)\ \vee\ \left(\sup_{\gamma<\sigma}\alpha_{\gamma}\right)+1,\quad \sigma<\kappa.\]
The relevant basis projections are obtained by restriction from the projections provided by the 
definition of $\Phi$. It is clear that the basis constant is at most $C$.

In proving (1) we apply the quotient mapping $q\colon \X\to \X\mod \bigcap_{\beta<\kappa}[x_{\alpha}:\beta<\alpha<\kappa]$.
Then $\{q(x_{\alpha})\}_{\alpha<\kappa}$ becomes a $\SD$ sequence in 
$\X\mod \bigcap_{\beta<\kappa}[x_{\alpha}:\beta<\alpha<\kappa]$. Since the conditions $(\mathrm{I})$ and 
$(\mathrm{II})$ pass on to quotients and imply $(\mathcal{B})$, according to Proposition \ref{prop1}, 
we may apply (2) to extract a basic sequence $\{q(x_{\alpha_{\beta}})\}_{\beta<\kappa}$. Then, one lifts this sequence to obtain a corresponding biorthogonal system. Indeed, this biorthogonal sequence will be bounded, 
since the basis projections $P_{\lambda}$ on the quotient are uniformly bounded and the suitable biorthogonal functional $f_{\alpha_{\beta}}$ is obtained from the $1$-dimensional projection $P_{\lambda+1}-P_{\lambda}$. 

Next, we will indicate the key modification to obtain the monotonicity of the basis in the case where $\{x_{\alpha}\}_{\alpha<\kappa}$ is $\sigma(\X,\Z)$-null and $\Z\subset\X^{\ast}$ is $1$-norming. 
Given an initial segment $[0,\theta]$ with $\theta<\kappa$ infinite, we let $\{f_{\gamma}\}_{\gamma<\theta}\subset \S_{\X^{\ast}}\cap \Z$ be a set of functionals that $1$-norms $[x_{\alpha}:\ \alpha\leq \theta]$. 
Then, according to the $\sigma(\X,\Z)$-convergence, we find $\beta<\kappa$ such that $f_{\gamma}(x_{\alpha})=0$ for $\beta\leq \alpha<\kappa$ and $\gamma<\theta$.
It follows that the natural linear projection 
\[P\colon [x_{\alpha}:\ \alpha\in [0,\theta]\cup [\beta,\kappa)]\to [x_{\alpha}:\ \alpha\leq \theta]\]
given by $P(x+y)=x$ for $x\in [x_{\alpha}:\ \alpha\in [0,\theta]],\ y\in  [x_{\alpha}:\ \alpha\in  [\beta,\kappa)]$ is contractive, since $\|x\|=\sup_{f\in\Z\cap \S_{\X^{\ast}}}f(x)=\sup_{f\in\Z\cap \S_{\X^{\ast}}}f(x+ty)\leq \|x+ty\|$ for $t\in \R$.
\end{proof}

\begin{remark}
The recursive method in the proof of Theorem \ref{thm: first_main} can be adapted for verifying the following fact:
Suppose that in Theorem \ref{thm: first_main} for each $\gamma<\kappa$ there are ordinals
$\gamma<\mu<\nu<\kappa$ such that $[x_{\mu}]^{\bot}$ is $1$-norming for $[x_{\alpha}:\ \alpha\in [0,\kappa)\setminus (\gamma,\nu)]$. Then, one can extract a suppression unconditional basic sequence of length $\kappa$.
\end{remark}

\begin{theorem}\label{thm: second_main}
Let $\X$ satisfy $(\mathcal{B})$ and admit a generating $\SD$ sequence $\{x_{\alpha}\}_{\alpha<\kappa}$.
\begin{enumerate}
\item[(i)]{If $\kappa=\omega_{1}$, then $\X$ has $\SCP$.} 
\item[(ii)]{If $\kappa<\aleph_{\omega}$ and $\{x_{\alpha}\}_{\alpha<\kappa}$ is an M-basis, then $\X$ has SCP.}
\item[(iii)]{Let $\kappa=\aleph_{k}, 0<k<\omega,$ and $\Y\subset \X$ be a separable subspace. Then there is a subspace $\Z\subset \X$ with $\mathrm{codens}(\Z)\leq\aleph_{k-1}$ and a separable subspace $\mathrm{W}\supset \Y$ complemented in $\Z$.} 
\end{enumerate}
If $\X$ satisfies additionally $(1$-$\mathcal{B})$ in $(\mathrm{i})$ or $(\mathrm{ii})$, then $\X$ has $1$-$\SCP$.
In all cases the complemented subspaces can be chosen in such a way that they are generated by subsequences of $\{x_{\alpha}\}_{\alpha<\kappa}$.
\end{theorem}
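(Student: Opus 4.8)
The plan is to extract every projection from a single geometric engine, namely Lemma~\ref{lm: angle}, and then to organise the separable complementation by a closing-off that descends through the finitely many cardinals below $\kappa$. First I would record three preliminary facts. (a) Since $\{x_{\alpha}\}_{\alpha<\kappa}$ is $\SD$, the tails $Z_{\beta}=[x_{\alpha}:\ \beta<\alpha<\kappa]$ form a nested chain with $\bigcap_{\beta}Z_{\beta}=\{0\}$; hence for any closed $U\subset\X$ with $\dens(U)<\kappa$, Lemma~\ref{lm: angle} furnishes an ordinal $\beta$ (and, as the $Z_{\beta}$ decrease, every larger ordinal) with $\dist(\S_{U},Z_{\beta})\geq 1/r$, which says exactly that the algebraic projection $U+Z_{\beta}\to U$ along $Z_{\beta}$ is bounded by $r$; under $(1$-$\mathcal{B})$ one has $r=1$ and the projection is contractive. (b) Both $(\mathcal{B})$ and $(1$-$\mathcal{B})$ pass to arbitrary closed subspaces $\X'\subset\X$: for a chain $\{W_{\eta}\}\subset\X'$ with $\bigcap_{\eta}W_{\eta}=\{0\}$ one has $\bigcap_{\eta}\overline{\B_{\X'}+W_{\eta}}\subset\bigcap_{\eta}\overline{\B_{\X}+W_{\eta}}\cap\X'$, which is bounded (resp.\ contained in $\B_{\X'}$). (c) Every index-restriction of an M-basis is again an M-basis of its closed span, so each such subsystem is $\SD$ under every reordering; this is precisely the point where the M-basis hypothesis of (ii) is used, and it is what a merely generating $\SD$ sequence fails to supply.

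Next I would isolate the \emph{closing-off}. Fix $\theta_{0}<\kappa$ and let $g(\theta)$ be an ordinal produced by (a) with $\dist(\S_{[x_{\alpha}:\ \alpha\leq\theta]},Z_{g(\theta)})\geq 1/r$. Iterate $\beta_{n+1}=g(\beta_{n})$ from some $\beta_{0}>\theta_{0}$ and set $\beta_{*}=\sup_{n}\beta_{n}$. For $v$ in the dense set $\bigcup_{n}[x_{\alpha}:\ \alpha\leq\beta_{n}]$ and $t\in T:=[x_{\alpha}:\ \alpha\geq\beta_{*}]\subset Z_{g(\beta_{n})}$ one gets $\|v+t\|\geq r^{-1}\|v\|$, so the angle between $V:=[x_{\alpha}:\ \alpha<\beta_{*}]$ and $T$ is positive; since $V+T$ contains every $x_{\alpha}$ and is therefore dense, the projection $V+T\to V$ extends to a bounded (contractive under $(1$-$\mathcal{B})$) projection $\X\to V$. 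Thus $V$ is a complemented initial segment of density $\leq\abs{\beta_{*}}$, and by regularity of $\kappa$ together with $\kappa<\aleph_{\omega}$ one has $\abs{\beta_{*}}\leq\aleph_{k-1}$ whenever $\theta_{0}<\kappa=\aleph_{k}$. Statement (iii) now follows from a single step of (a): enlarging a countable $S_{0}$ with $\Y\subset[x_{\alpha}:\ \alpha\in S_{0}]$, put $W=[x_{\alpha}:\ \alpha\in S_{0}]$ and $\Z=[x_{\alpha}:\ \alpha\in S_{0}\cup(\beta_{0},\kappa)]$ with $\beta_{0}>\sup S_{0}$; then $W$ is complemented in $\Z$ by the projection of (a), while $\codens(\Z)\leq\codens(Z_{\beta_{0}})\leq\abs{\beta_{0}}\leq\aleph_{k-1}$. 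Statement (i) is the case $\kappa=\omega_{1}$ of the closing-off: here $\abs{\beta_{*}}\leq\aleph_{0}$, so $V\supset\Y$ is separable and complemented, giving $\SCP$ (and $1$-$\SCP$ under $(1$-$\mathcal{B})$) with no M-basis assumption.

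Finally I would prove (ii) by induction on $k$ with $\kappa=\aleph_{k}$, the base case $k=0$ being the separable one. Given separable $\Y$, choose countable $S_{0}$ with $\Y\subset[x_{\alpha}:\ \alpha\in S_{0}]$ and run the closing-off from $\theta_{0}=\sup S_{0}$ to obtain a complemented initial segment $\X_{1}=V\supset\Y$ with $\dens(\X_{1})\leq\aleph_{k-1}$ and a bounded (contractive) projection $\Pi\colon\X\to\X_{1}$. By (b) the space $\X_{1}$ satisfies $(\mathcal{B})$ (resp.\ $(1$-$\mathcal{B})$), and by (c) the system $\{x_{\alpha}:\ \alpha<\beta_{*}\}$, reindexed by a cardinal $\leq\aleph_{k-1}$, is a generating $\SD$ M-basis of $\X_{1}$; the induction hypothesis then yields a separable $W\supset\Y$ complemented in $\X_{1}$ by some $Q$, whence $Q\Pi\colon\X\to W$ is the desired projection onto a subsequence-generated separable space, contractive whenever all the pieces are. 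This establishes $\SCP$, and $1$-$\SCP$ under $(1$-$\mathcal{B})$; the closing clause on subsequences is automatic, since $V$, $W$ and $\Z$ were defined throughout as closed spans of index sets.

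The main obstacle is the descent in (ii): one must guarantee that each complemented initial segment again carries a generating $\SD$ system of strictly smaller density, so that the recursion terminates. This is exactly where fact (c) is indispensable, for a sub-chain of a merely $\SD$ generating sequence need not have trivial tail-intersection relative to the internal endpoint $\beta_{*}$, whereas an index-restriction of an M-basis remains an M-basis and hence $\SD$. It is also where the two arithmetic hypotheses enter: regularity of $\kappa$ keeps $\abs{\beta_{*}}\leq\aleph_{k-1}<\kappa$ after a countable closing-off, and $\kappa<\aleph_{\omega}$ ensures that only the finitely many descent steps $\aleph_{k}\to\aleph_{k-1}\to\dots\to\aleph_{0}$ are required.
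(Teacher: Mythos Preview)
Your proof is correct and follows the same overall strategy as the paper: use Lemma~\ref{lm: angle} to produce bounded natural projections onto initial segments, perform a closing-off to obtain a complemented initial segment of strictly smaller density, and then descend through the finitely many cardinals below $\kappa$, invoking the M-basis hypothesis so that each restricted system is again $\SD$ after reindexing. The paper packages the closing-off as a separate statement (Lemma~\ref{lm: base_reduction}, which closes off at cofinality equal to the predecessor cardinal rather than at cofinality $\omega$) and routes the projection estimates through the proof of Theorem~\ref{thm: first_main}; you instead invoke Lemma~\ref{lm: angle} directly and organise (ii) as a clean induction on $k$. Your treatment of (iii) is also leaner: the paper already builds the full finite tower $\Z_{k}\to\Z_{k-1}\to\cdots\to\Z_{0}$ there (so that (ii) can be read off by reorganising indices), whereas you observe that a single application of fact~(a) with $U=W$ suffices for the bare statement of (iii). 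These are organisational differences only; the geometric engine, the use of regularity to bound $|\beta_{*}|$, and the role of the M-basis hypothesis in guaranteeing that restricted subsystems remain $\SD$ are identical in both arguments.
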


\begin{lemma}\label{lm: base_reduction}
Let $\X$ be a Banach space satisfying $(\mathcal{B})$. 
Suppose that $\{x_{\alpha}\}_{\alpha<\kappa^{+}}$ is a $\SD$ sequence of $\X$. Then there exists an ordinal $\phi<\kappa^{+},\ \cf(\phi)=\kappa,$ such that
$\{x_{\alpha}\}_{\alpha<\phi}$ is $\SD$ and $[x_{\alpha}:\ \alpha<\phi]$ is complemented in $[x_{\alpha}:\ \alpha<\kappa^{+}]$.
\end{lemma}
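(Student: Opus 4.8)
The plan is to single out $\phi$ as the limit of a $\kappa$-indexed recursion that keeps an initial block at a uniformly positive angle from all sufficiently late tails, and then to read off both the complementation and the $\SD$ property from this angle condition. Write $W=[x_{\alpha}:\alpha<\kappa^{+}]$ and, for $\psi<\kappa^{+}$, let $V_{\psi}=[x_{\alpha}:\psi\le\alpha<\kappa^{+}]$ and $\Y_{\psi}=[x_{\alpha}:\alpha<\psi]$. Since $\{x_{\alpha}\}_{\alpha<\kappa^{+}}$ is $\SD$ we have $\bigcap_{\psi<\kappa^{+}}V_{\psi}=\{0\}$, so $\{V_{\psi}\}_{\psi<\kappa^{+}}$ is a nested family of closed subspaces of $\X$ with trivial intersection, indexed by the uncountable regular cardinal $\kappa^{+}$. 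Condition $(\mathcal{B})$, valid at $\kappa^{+}$, then supplies an $r$ with $\bigcap_{\psi<\kappa^{+}}\overline{\B_{\X}+V_{\psi}}\subset r\B_{\X}$; set $c=1/r$.

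First I would run the recursion. Put $\phi_{0}=0$. Given $\phi_{\xi}<\kappa^{+}$, the subspace $\Y_{\phi_{\xi}}$ has $\dens(\Y_{\phi_{\xi}})\le\kappa<\kappa^{+}$, so Lemma \ref{lm: angle}, applied to the chain $\{V_{\psi}\}_{\psi<\kappa^{+}}$, yields $\beta<\kappa^{+}$ with $\dist(\S_{\Y_{\phi_{\xi}}},V_{\beta})\ge c$; set $\phi_{\xi+1}=\max(\beta,\phi_{\xi}+1)$, noting that passing to the smaller tail $V_{\phi_{\xi+1}}\subset V_{\beta}$ only increases the distance. At limit stages take suprema. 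Because $\kappa$ is regular and each $\phi_{\xi}<\kappa^{+}$, the ordinal $\phi=\sup_{\xi<\kappa}\phi_{\xi}$ satisfies $\phi<\kappa^{+}$ and $\cf(\phi)=\kappa$, and by construction $\dist(\S_{\Y_{\phi_{\xi}}},V_{\phi_{\xi+1}})\ge c$ for every $\xi<\kappa$.

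The crucial structural input is the folklore collapse: since the norm topology of $\X$ is countably tight and $\{\Y_{\phi_{\xi}}\}_{\xi<\kappa}$ is an increasing chain of closed subspaces indexed by the regular uncountable cardinal $\kappa$, we have $\Y_{\phi}=\overline{\bigcup_{\xi<\kappa}\Y_{\phi_{\xi}}}=\bigcup_{\xi<\kappa}\Y_{\phi_{\xi}}$, so every $z\in\Y_{\phi}$ already lies in some $\Y_{\phi_{\xi^{*}}}$. For a unit vector $z\in\Y_{\phi_{\xi^{*}}}$, the inclusion $V_{\phi}\subset V_{\phi_{\xi^{*}+1}}$ gives $\dist(z,V_{\phi})\ge\dist(z,V_{\phi_{\xi^{*}+1}})\ge c$, whence $\dist(\S_{\Y_{\phi}},V_{\phi})\ge c$. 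This positive angle forces $\Y_{\phi}\cap V_{\phi}=\{0\}$ and makes $\Y_{\phi}+V_{\phi}$ closed; as this sum contains every $x_{\alpha}$ it equals $W$. Thus $W=\Y_{\phi}\oplus V_{\phi}$ is a topological direct sum and $[x_{\alpha}:\alpha<\phi]=\Y_{\phi}$ is complemented in $[x_{\alpha}:\alpha<\kappa^{+}]$.

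The hard part is to verify that the initial segment is still $\SD$, and this is exactly where the collapse is used a second time. Given $z\neq 0$ in $\bigcap_{\gamma<\phi}[x_{\alpha}:\gamma<\alpha<\phi]$, choose $\xi^{*}$ with $z\in\Y_{\phi_{\xi^{*}}}$ and take $\gamma=\phi_{\xi^{*}+1}<\phi$; then $[x_{\alpha}:\gamma<\alpha<\phi]\subset V_{\phi_{\xi^{*}+1}}$, while the angle condition gives $\Y_{\phi_{\xi^{*}}}\cap V_{\phi_{\xi^{*}+1}}=\{0\}$, so $z$ cannot belong to this tail, a contradiction. Hence $\bigcap_{\gamma<\phi}[x_{\alpha}:\gamma<\alpha<\phi]=\{0\}$, and dispersedness is then automatic, since each $[x_{\alpha}:\beta<\alpha<\phi]$ with $\beta<\phi$ contains the nonzero vector $x_{\beta+1}$ and so strictly contains $\{0\}$. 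I expect the only delicate points to be bookkeeping: confirming $\bigcap_{\psi}V_{\psi}=\{0\}$ from the $\SD$ hypothesis irrespective of whether tails are taken open or closed, checking that Lemma \ref{lm: angle} may be invoked for the ambient space $\X$ rather than for $W$, and keeping $\{\phi_{\xi}\}$ strictly increasing so that $\cf(\phi)=\kappa$.
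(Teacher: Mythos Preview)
Your proof is correct and follows essentially the same strategy as the paper: a $\kappa$-length recursion producing a cofinal ladder below some $\phi<\kappa^{+}$, followed by the countable-tightness collapse $\Y_{\phi}=\bigcup_{\xi<\kappa}\Y_{\phi_{\xi}}$ to verify both complementation and the $\SD$ property. The only cosmetic difference is that the paper extracts the uniform constant $C$ by quoting the projection-norm analysis from the proof of Theorem~\ref{thm: first_main}, whereas you invoke Lemma~\ref{lm: angle} directly to get the uniform angle $c=1/r$; these are equivalent formulations of the same estimate, and your route is if anything slightly more self-contained.
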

\begin{proof}
By studying the proof of Theorem \ref{thm: first_main} we obtain that there is constant $1\leq C<\infty$ and increasing cofinal sequences $\{\theta_{\beta}\}_{\beta<\kappa^{+}},\{\eta_{\beta}\}_{\beta<\kappa^{+}}\subset \kappa^{+}$ such that 
$\theta_{\beta}<\eta_{\beta}$ and such that the natural projection 
\[P_{\theta_{\beta},\eta_{\beta}}\colon [\{x_{\alpha}:\ \alpha\in [0,\kappa^{+})\setminus (\theta_{\beta}, \eta_{\beta})\}]\to [\{x_{\alpha}:\ \alpha \in [0,\theta_{\beta}]\}]\]
satisfies $\|P_{\theta_{\beta},\eta_{\beta}}\|\leq C$ for each $\beta$.
Since $\{\theta_{\beta}\}_{\beta<\kappa^{+}}$ is cofinal in $\kappa^{+}$, we can define a function
$\Phi\colon [0,\kappa^{+})\to [0,\kappa^{+})$ by letting $\Phi(\alpha)$ be the least ordinal $\sigma<\kappa^{+}$
such that the linear projection $P_{\alpha,\sigma}$ is well-defined and $\|P_{\alpha,\sigma}\|\leq C$ holds.
We define $\phi(0)=0$ and recursively
\[\phi(\beta)=\Phi(\sup_{\alpha<\beta}\phi(\alpha)+1)\]
for $1\leq \beta<\kappa$. Observe that this defines an increasing sequence.
Let $\phi=\sup_{\beta} \phi(\beta)$. Observe that $\cf(\phi)=\kappa$. Moreover, by the construction of $\phi$ 
it holds that $\Phi([0,\phi))\subset [0,\phi)$.

We claim that $\phi$ is the required ordinal and the natural projection
\[P\colon [x_{\alpha}:\ \alpha<\kappa^{+}]\to [x_{\alpha}:\ \alpha < \phi]\]
is well-defined and satisfies $\|P\|\leq C$. Towards this, let $x\in [x_{\alpha}:\ \alpha < \phi]$, $\|x\|=1$.

Fix a countable subset $\Gamma_{0}\subset [0,\phi)$ such that $x\in [x_{\alpha}:\ \alpha \in \Gamma_{0}]$. Since $\Phi(\sup\Gamma_{0})<\phi$, we obtain the following facts by the construction of $\Phi$.
\begin{enumerate}
\item[(i)]{We have $x\notin [x_{\alpha}:\ \Phi(\sup\Gamma_{0})\leq \alpha <\phi]$. We conclude that 
$\{x_{\alpha}\}_{\alpha<\phi}$ is $\SD$.}
\item[(ii)]{We have $\|x+z\|\geq 1/C$ for all $z\in [x_{\alpha}:\ \Phi(\sup\Gamma_{0})\leq \alpha <\kappa^{+\\}]$. In particular, this holds for $z\in [x_{\alpha}:\ \phi\leq \alpha <\kappa^{+\\}]$.
We conclude that $[\{x_{\alpha}\}_{\alpha<\phi}]$ is complemented in $[\{x_{\alpha}\}_{\alpha<\kappa^{+}}]$ in the natural way, the projection constant being at most $C$.}
\end{enumerate}
\end{proof}

\begin{lemma}\label{lm: Mbaselemma}
Let $\{(x_{\alpha},x_{\alpha}^{\ast})\}_{\alpha<\lambda}\subset \X\times \X^{\ast}$ be an M-basis. Then for each uncountable regular cardinal $\kappa<\lambda$ there is a mapping $\phi\colon\kappa\to \lambda$
such that $\{(x_{\phi(\beta)},x_{\phi(\beta)}^{\ast})\}_{\beta<\kappa}$ is an M-basic sequence and $\kappa\subset \phi(\kappa)$.
\end{lemma}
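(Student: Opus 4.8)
The plan is to produce the mapping $\phi$ by selecting an index set $A=\phi(\kappa)\subset\lambda$ with $\kappa\subset A$ and $|A|=\kappa$, enumerating $A$ by a bijection $\phi\colon\kappa\to A$, and then checking that the restriction of the given M-basis to the indices in $A$ is again an M-basis of its own closed span. The fact to isolate is that restricting an M-basis to \emph{any} subset of indices yields an M-basic sequence; once this is available, the requirement $\kappa\subset\phi(\kappa)$ is met simply by forcing the initial segment $[0,\kappa)$ into $A$.

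First I would set $Y=[x_{\alpha}:\ \alpha\in A]$ and observe that the subsystem $\{(x_{\alpha},x_{\alpha}^{\ast})\}_{\alpha\in A}$ is biorthogonal (a restriction of a biorthogonal system) and fundamental in $Y$ (by the definition of $Y$). The only genuine verification is totality of the restricted functionals: if $y\in Y$ satisfies $x_{\alpha}^{\ast}(y)=0$ for every $\alpha\in A$, I claim $y=0$. For any $\alpha\in\lambda\setminus A$ the functional $x_{\alpha}^{\ast}$ is continuous and annihilates each spanning vector $x_{\beta}$, $\beta\in A$ (since $\alpha\neq\beta$), hence annihilates all of $Y$; thus $x_{\alpha}^{\ast}(y)=0$ for \emph{every} $\alpha<\lambda$, and totality of the ambient M-basis forces $y=0$. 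By the equivalence between separation and $\omega^{\ast}$-density of the linear span recalled in the preliminaries, this gives $\overline{[x_{\alpha}^{\ast}|_{Y}:\ \alpha\in A]}^{\omega^{\ast}}=Y^{\ast}$, so $\{(x_{\alpha},x_{\alpha}^{\ast})\}_{\alpha\in A}$ is an M-basic sequence.

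For the index condition, the simplest choice $A=\kappa$ (the initial segment) together with $\phi(\beta)=\beta$ already yields $\kappa\subset\phi(\kappa)$ and $|\phi(\kappa)|=\kappa$. If one instead wants the image to be closed under some auxiliary selection (as is convenient when these sub-M-bases are later fed into the complementation arguments of Theorem \ref{thm: second_main}), I would build $A$ by a closing-off recursion: start from $A_{0}=\kappa$, and at each of $\kappa$ stages adjoin the (fewer than $\kappa$) indices demanded by the closure step, taking unions at limit stages. The regularity of $\kappa$ guarantees that the running sets stay of size less than $\kappa$ and that $A=\bigcup_{\xi<\kappa}A_{\xi}$ has size exactly $\kappa$ while still containing $\kappa$; the totality argument above then applies verbatim to this $A$.

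The main obstacle is precisely the totality step: one must ensure the M-basis property survives the passage to a subset of indices, i.e. that the restricted functionals still separate the points of the smaller span. This is the one place where the argument is not pure bookkeeping, and it is exactly where continuity of the $x_{\alpha}^{\ast}$ is combined with totality of the full system. In the closing-off variant the secondary obstacle is keeping $|A|=\kappa$ throughout the recursion, which is handled by the regularity of $\kappa$.
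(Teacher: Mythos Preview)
Your proof is correct and in fact simpler than the paper's. The paper builds the index set by a closing-off recursion of length $\omega_{1}$: starting from $\Gamma_{0}=[0,\kappa)$ it adjoins at each stage enough indices so that $\{x_{\alpha}^{\ast}\}_{\alpha\in\Gamma_{\sigma+1}}$ separates $[x_{\alpha}:\alpha\in\Gamma_{\sigma}]$, using a Lindel\"{o}f-number count to keep $|\Gamma_{\sigma}|=\kappa$, and then takes $\Gamma=\bigcup_{\sigma<\omega_{1}}\Gamma_{\sigma}$ so that the union of the closed spans is already closed. You bypass this entirely by observing the general fact that \emph{any} restriction of an M-basis to a subset $A$ of indices is automatically M-basic: the functionals $x_{\alpha}^{\ast}$ with $\alpha\notin A$ vanish on $[x_{\beta}:\beta\in A]$ by biorthogonality and continuity, so totality of the restricted family $\{x_{\alpha}^{\ast}|_{Y}\}_{\alpha\in A}$ follows immediately from totality of the full system. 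Hence the identity map $\phi(\beta)=\beta$ on $\kappa$ already does the job, and the closing-off recursion in the paper (which you also sketch as an optional variant) is unnecessary for the lemma as stated.
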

\begin{proof}
Let $\Gamma_{0}=[0,\kappa)$. Since the Lindel\"{o}f number of $\S_{\X}\cap [x_{\alpha}:\ \alpha\in\Gamma_{0}]$ is $\kappa$ and the biorthogonal functionals separate $\X$, 
we obtain that there is a subset $\Gamma_{1}\subset \lambda$ such that $\Gamma_{0}\subset \Gamma_{1}$, $|\Gamma_{1}|=\kappa$ and $\{x_{\alpha}^{\ast}\}_{\alpha\in \Gamma_{1}}$ separates 
$[x_{\alpha}:\ \alpha\in \Gamma_{0}]$. We proceed by a recursion of length $\omega_{1}$ to construct $\Gamma=\bigcup_{\sigma<\omega_{1}}\Gamma_{\sigma}$. Then 
$[x_{\alpha}:\ \alpha\in \Gamma]=\bigcup_{\sigma<\omega_{1}}[x_{\alpha}:\ \alpha\in \Gamma_{\sigma}]$, so that reorganizing $\{\phi(\beta)\}_{\beta<\kappa}=\Gamma$ yields the required mapping.
\end{proof}

\begin{proof}[Proof of Theorem \ref{thm: second_main}]
Suppose that $\Y\subset\X$ is a separable subspace. In verifying SCP we may assume without loss of generality that $\Y\subset [x_{\alpha}:\ \alpha<\omega]$. 

Let us consider the claim (i). By the proof of Theorem \ref{thm: first_main} we obtain a sequence $\{\sigma_{\beta}\}_{\beta<\omega_{1}}\subset \omega_{1}$ such that 
there is a bounded linear projection $P_{\beta,\sigma_{\beta}}\colon [x_{\alpha}:\ \alpha\in [0,\omega_{1})\setminus [\beta,\sigma_{\beta})]\to [x_{\alpha}:\ \alpha<\beta]$ for each $\beta$. 
Then $\liminf_{\beta\to\omega_{1}}\|P_{\beta,\sigma_{\beta}}\|=C<\infty$ according to the 
regularity of $\omega_{1}$. Thus we can find a sequence of indices $\omega<\beta_{0}<\sigma_{0}<\beta_{1}<\sigma_{1}<\ldots,$ such that for all $n<\omega$ there is a linear projection 
$P_{\beta_{n},\sigma_{n}}$ with $\|P_{\beta_{n},\sigma_{n}}\|\leq C$. Then according to the construction of the sequence, the mapping $\span(x_{\alpha}:\ \alpha<\omega_{1})\to \span(x_{\alpha}:\ \alpha<\bigvee_{n}\beta_{n})$ given by 
$x+z\mapsto x$ for $x\in \span(x_{\alpha}:\ \alpha<\bigvee_{n}\beta_{n})$ and $z\in \span(x_{\alpha}:\ \bigvee_{n}\beta_{n}\leq \alpha <\omega_{1})$ defines a bounded linear projection onto $\span(x_{\alpha}:\ \alpha<\bigvee_{n}\beta_{n})$
in the dense linear subspace $\span(x_{\alpha}:\ \alpha<\omega_{1})$ of $\X$. Let us extend this projection to the whole space. Thus we have obtained a continuous linear projection onto
$[x_{\alpha}:\ \alpha<\bigvee_{n}\beta_{n}]$.

Let us consider the claim (iii). Write $\aleph_{k}=\kappa$ for the suitable $k<\omega$. By applying Lemma 
\ref{lm: base_reduction} we obtain that there is a bounded 
linear projection $P_{k-1}$ onto $[x_{\alpha}:\ \alpha<\phi_{k-1}]$ for some $\phi_{k-1}<\kappa,\ \cf(\phi_{k-1})=\aleph_{k-1}$. Pick a cofinal subsequence $\psi_{k-1}=\{\alpha_{\beta}\}_{\beta<\aleph_{k-1}}\supset \omega$ of $\phi_{k-1}$. 
We apply Lemma \ref{lm: base_reduction} again to find that there is $\phi_{k-2}<\aleph_{k-1},\ \cf(\phi_{k-2})=\aleph_{k-2},$ and a bounded linear projection 
$P_{k-2}\colon  [x_{\alpha}:\ \alpha\in \psi_{k-1}] \to [x_{\alpha}:\ \alpha<\phi_{k-2},\ \alpha\in \psi_{k-1}]$. Next, we will pick a further increasing subsequence $\psi_{k-2}\colon \aleph_{k-2}\to \psi_{k-1}$ such that 
$\psi_{k-2}$ is cofinal in $\phi_{k-2}$. We proceed in this manner to produce projections 
$P_{k-i-1}\colon [x_{\alpha}:\ \alpha\in \psi_{k-i}]\to [x_{\alpha}:\ \alpha<\phi_{k-i-1},\ \alpha\in \psi_{k-i}]$ for $1\leq i\leq k-1$.
Write $\phi_{k}=0$ and $\psi_{k}=[0,\kappa)$. Put $\Gamma=\psi_{1}\cup \bigcup_{0\leq i<k}[\phi_{i},\kappa)\cap \psi_{i+1}$ and $\Z=[x_{\alpha}:\ \alpha\in \Gamma]$.

Now, since $k<\omega$, this produces a finite chain of continuous linear projections
\[\Z=\Z_{k}\longrightarrow \Z_{k-1}\longrightarrow \ldots \longrightarrow \Z_{i}\longrightarrow \ldots\longrightarrow \Z_{1}\longrightarrow \Z_{0}\supset \Y,\]
where $\Z_{i}$ has density $\aleph_{i}$ for $0\leq i\leq k$. Observe that $[\phi_{k-1},\kappa)\subset \Gamma$, so that $\mathrm{codens}(\Z)\leq \aleph_{k-1}$.

The claim (ii) is obtained from the proof of claim (iii) as follows. Since M-basic sequences of regular length are $\SD$ under any reordering, we may use the above projections $P_{i}$ with the exception that after each 
step we may reorganize the indices in such a way that $\phi_{i}=\aleph_{i}$. Thus no indices need be discarded and we can set $\Z=\X$.
\end{proof}

Apparently, the requirement of the existence of a $\SD$ sequence $\{x_{\alpha}\}_{\alpha<\kappa}\subset\X$ somewhat narrows down the class of Banach spaces. For example, neither Kunen's $C(K)$ space (\cite{Ne}), Shelah's space (\cite{Sh}), nor any other $\mathrm{CSP}$ space admits such a sequence, see \cite{Gr,Tal}. Every nonseparable dual space 
has an uncountable biorthogonal system (see \cite[Cor. 4]{Ste}) and thus a dispersed sequence but not necessarily a $\SD$ sequence of regular uncountable length. For instance, this is the case with the dual of the James Tree space, $JT^{\ast}$, which has $\mathrm{CSP}$. Also, the $\mathrm{HI}$ spaces which admit a $\SD$ sequence $\{x_{\alpha}\}_{\alpha<\kappa}$ 
cannot satisfy conditions $(\mathrm{I})$, $(\mathrm{II})$, or, a fortiori, property $(C)$.

\begin{theorem}\label{thm: one_norming_M-basis}
Let $\X$ be a Plichko space with a countably $1$-norming M-basis $\{x_{\alpha}\}_{\alpha<\theta}$,
where $\theta$ is an uncountable cardinal. Then there exists an injective mapping $\phi\colon \theta\to \theta$ such that
$\{x_{\phi(\alpha)}\}_{\alpha<\theta}$ forms a monotone basic sequence. Moreover, if $\theta$ is regular, then $\{\phi(\alpha)\}_{\alpha<\theta}$ can be taken to be an increasing sequence.
\end{theorem}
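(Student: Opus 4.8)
The plan is to realize the monotone basic sequence directly by a transfinite recursion that simultaneously selects the indices $\phi(\beta)$ and a coherent family of norming functionals, exploiting the defining feature of a Plichko space. Write $\Z=\{f\in\X^{\ast}:\ |\{\gamma:\ f(x_{\gamma})\neq 0\}|\leq\aleph_{0}\}$ for the countably supported functionals; by hypothesis $\Z$ is $1$-norming, so $\|x\|=\sup_{f\in\Z\cap\S_{\X^{\ast}}}|f(x)|$ for every $x\in\X$, and in particular $\Z$ $1$-norms every subspace $[x_{\alpha}:\ \alpha\in A]$. For $f\in\Z$ put $\mathrm{supp}(f)=\{\gamma:\ f(x_{\gamma})\neq0\}$, a countable set. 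The monotonicity of a candidate sequence $y_{\beta}=x_{\phi(\beta)}$ reduces to the following: for every $\lambda<\theta$ there is a family in $\Z\cap\S_{\X^{\ast}}$ that $1$-norms $[y_{\gamma}:\ \gamma<\lambda]$ and annihilates the tail $[y_{\gamma}:\ \lambda\leq\gamma<\theta]$, since such functionals give $\|y\|=\sup_f|f(y)|=\sup_f|f(y+z)|\leq\|y+z\|$ for $y$ in the initial span and $z$ in the tail span, which is exactly the monotone basic inequality.

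For the recursion, suppose $\phi(\gamma)$ and sets $F_{\gamma}\subset\Z\cap\S_{\X^{\ast}}$ have been chosen for $\gamma<\beta$, where $F_{\gamma}$ $1$-norms $[x_{\phi(\delta)}:\ \delta\leq\gamma]$ and $|F_{\gamma}|\leq|\gamma|\cdot\aleph_{0}$. Let $D_{\beta}=\bigcup_{\gamma<\beta}\mathrm{supp}(F_{\gamma})$. A routine cardinal computation gives $|D_{\beta}|\leq|\beta|<\theta$ for every $\beta<\theta$, even when $\theta$ is singular. Choose $\phi(\beta)\in\theta\setminus\left(D_{\beta}\cup\{\phi(\gamma):\ \gamma<\beta\}\right)$; this is possible since the excluded set has size $<\theta$. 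When $\theta$ is regular we may in addition demand $\phi(\beta)>\sup_{\gamma<\beta}\phi(\gamma)$, because that supremum is then $<\theta$, making $\phi$ strictly increasing. Finally pick $F_{\beta}\subset\Z\cap\S_{\X^{\ast}}$, $|F_{\beta}|\leq|\beta|\cdot\aleph_{0}$, that $1$-norms $[x_{\phi(\delta)}:\ \delta\leq\beta]$ (select, for each point of a dense subset of the sphere of that subspace and each $n$, a functional from $\Z\cap\S_{\X^{\ast}}$ within $1/n$).

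The crucial point in the verification is that each $f\in F_{\gamma}$ annihilates every future vector: if $\delta>\gamma$ then $\phi(\delta)\notin D_{\delta}\supseteq\mathrm{supp}(f)$, so $f(y_{\delta})=f(x_{\phi(\delta)})=0$. Hence for any $\lambda<\theta$ the family $\bigcup_{\gamma<\lambda}F_{\gamma}$ annihilates the tail $[y_{\gamma}:\ \gamma\geq\lambda]$ while $1$-norming each $[y_{\delta}:\ \delta\leq\gamma]$, $\gamma<\lambda$, and therefore $1$-norms the dense union, hence all of $[y_{\gamma}:\ \gamma<\lambda]$. By the first paragraph this yields $\|y\|\leq\|y+z\|$ for all admissible $y,z$, so $\{x_{\phi(\beta)}\}_{\beta<\theta}$ is a monotone basic sequence, with $\phi$ increasing when $\theta$ is regular. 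For regular $\theta$ one can in fact bypass the recursion entirely: since each $f\in\Z$ has countable support, $\{x_{\alpha}\}_{\alpha<\theta}$ is automatically $\sigma(\X,\Z)$-null and, being M-basic, dispersed, so Theorem \ref{thm: first_main}(3) with the $1$-norming $\Z$ directly produces an increasing monotone basic subsequence.

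I expect the main obstacle to be the singular case. There Theorem \ref{thm: first_main} is unavailable, as it presumes a regular length, and no increasing reindexing can succeed, because a strictly increasing $\theta$-sequence of ordinals below a singular $\theta$ may become cofinal before stage $\theta$; the recursion must therefore abandon monotone index growth and rely solely on the support-avoidance bookkeeping. The delicate accounting is to guarantee $|D_{\beta}|<\theta$ at every stage $\beta<\theta$ simultaneously across all infinite cardinals below $\theta$, which is precisely what keeps both the choice of $\phi(\beta)$ and the injectivity of $\phi$ alive all the way through a singular $\theta$.
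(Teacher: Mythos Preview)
Your proof is correct and follows essentially the same approach as the paper. The paper packages the support-avoidance bookkeeping into a single map $\Lambda\colon 2^{\theta}\to 2^{\theta}$ with $A\subset\Lambda(A)$, $|\Lambda(A)|\leq|A|\vee\omega$, and $[x_{\alpha}:\alpha\in\theta\setminus\Lambda(A)]^{\bot}$ $1$-norming $[x_{\alpha}:\alpha\in A]$, then recursively takes $\phi(\mu)\in\theta\setminus\Lambda(\{\phi(\alpha):\alpha<\mu\})$; your sets $D_{\beta}\cup\{\phi(\gamma):\gamma<\beta\}$ play precisely the role of $\Lambda(\{\phi(\gamma):\gamma<\beta\})$, and your explicit choice of the $1$-norming families $F_{\gamma}\subset\Z\cap\S_{\X^{\ast}}$ is exactly how one constructs $\Lambda$ in the first place. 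One minor quibble: your bound ``$|D_{\beta}|\leq|\beta|$'' should read $|D_{\beta}|\leq|\beta|\cdot\aleph_{0}$ to cover finite $\beta$, but since $\theta$ is uncountable this does not affect the argument. Your added observation that the regular case follows directly from Theorem~\ref{thm: first_main}(3) via $\sigma(\X,\Z)$-nullity is a nice shortcut not spelled out in the paper.
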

\begin{proof}
According to the countably $1$-norming property of the M-basis, we may choose for each $A\in \mathcal{P}(\theta)$ a set $\Lambda(A) \in \mathcal{P}(\theta)$ such that the resulting mapping $\Lambda\colon 2^{\theta}\to 2^{\theta}$ 
satisfies the following properties:
\begin{enumerate}
\item[(i)]{$A\subset \Lambda(A)$,}
\item[(ii)]{$|\Lambda(A)|\leq |A|\vee \omega$,}
\item[(iii)]{$[x_{\alpha}:\ \alpha\in \theta\setminus \Lambda(A)]^{\bot}$ $1$-norms $[x_{\alpha}:\ \alpha\in A]$.}
\end{enumerate}
Observe that then there exists a contractive projection
\[P\colon [x_{\alpha}:\ \alpha\in A \cup (\theta\setminus \Lambda(A))]\to  [x_{\alpha}:\ \alpha\in A]\]
for each $A\subset \mathcal{P}(\theta)$. Indeed, the functionals in 
$[x_{\alpha}:\ \alpha\in \theta\setminus \Lambda(A)]^{\bot}$ witness the fact that the relevant projections will be contractive similarly as at the end of the proof of Theorem \ref{thm: first_main}. The mapping $\phi$ is constructed 
recursively as follows. Let $\phi(0)=0$. For each $\mu<\theta$ and $\{\phi(\alpha)\}_{\alpha<\mu}\subset \theta$ we pick 
$\phi(\mu)\in \theta\setminus \Lambda(\{ \phi(\alpha):\ \alpha<\mu \})$. Then 
$|\Lambda(\{ \phi(\alpha):\ \alpha\leq \mu \})|\leq |\mu|\vee \omega$.
\end{proof}

In the proof of Theorem \ref{thm: first_main}, say, in the case $\dens(\X)=\omega_{1}$, we applied the well-ordering of the $\SD$ sequence, which, combined with the structural assumption $(\mathcal{B})$ of the space, yields that the sequence splits to three parts: 
the separable part generating the range of the basis projection, the separable residual part, and the nonseparable tail, which yields the kernel of the basis projection. Setting the residual middle part aside, we applied here a phenomenon in Banach spaces, which can be viewed as a weaker form of the $\SCP$ and which is mostly due to the well-ordering of the system of vectors. Namely, the intersections of the tail spaces are coseparable here, and thus we were able to apply 'combinatorial-geometro-topological' argument on the separable initial parts to extract a basic sequence step by step. 
There appear to be some connections between condition $(\mathcal{B})$, the $\SCP$ and the preservation of coseparable subspaces in countable intersections, and these are studied in the next section.

\section{WLD spaces and coseparable subspaces}

First, we would like to pose the following problems: Which 'large-density-related properties' of a Banach space are inherited by the coseparable subspaces? For example, if $\X$ is a non-WCG space then each coseparable subspace of $\X$ 
is such a space, see \cite{Valdivia}. What can be said about properties of Banach spaces holding up to restriction to coseparable subspaces? This question should be compared to Theorem \ref{thm: second_main} (iii) and Theorem \ref{thm: sigma_cosep} (ii).

The following observation is a consequence of WLD space characterization \eqref{eq: count}.
\begin{corollary}
In a WLD space $\X$ let $\Y\subset\X$ be a nonseparable subspace and $\Z\subset\X$ a coseparable subspace. Then $\dens(\Y\cap \Z)=\dens(\Y)$. More generally,
\[\dens(\X)=\dens(\bigcap_{n}E_{n})\ \vee\ \bigvee_{n}\codens(E_{n}),\]
where $(E_{n})$ is any countable sequence of closed subspaces of $\X$.
\end{corollary}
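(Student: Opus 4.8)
The plan is to reduce both assertions to a single lemma about WLD spaces, which I would read off directly from the characterization \eqref{eq: count}: \emph{if $V$ is WLD and $\mathcal{F}\subset V^{\ast}$ is a separating family, then $\dens(V)\le|\mathcal{F}|\vee\omega$.} To prove this I would fix an M-basis $\{v_{\gamma}\}_{\gamma\in\Gamma}$ of $V$ witnessing \eqref{eq: count}, so that each $f\in V^{\ast}$ is nonzero on only countably many $v_{\gamma}$. Setting $T=\bigcup_{f\in\mathcal{F}}\{\gamma:\ f(v_{\gamma})\neq 0\}$ presents $T$ as a union of $|\mathcal{F}|$ countable sets, so $|T|\le|\mathcal{F}|\vee\omega$. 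If some $\gamma\notin T$, then $f(v_{\gamma})=0$ for every $f\in\mathcal{F}$, contradicting separation since $v_{\gamma}\neq 0$; hence $\Gamma=T$. As the M-basis is fundamental, $\dens(V)=|\Gamma|\le|\mathcal{F}|\vee\omega$, which finishes the lemma.

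For the first assertion I would use that WLD passes to subspaces and to quotients, so that $\Y$ and $\Y/(\Y\cap\Z)$ are WLD. Composing $\Y\hookrightarrow\X$ with the quotient map $\X\to\X/\Z$ gives a map with kernel $\Y\cap\Z$, hence a continuous injection $j\colon\Y/(\Y\cap\Z)\to\X/\Z$. Since $\X/\Z$ is separable it carries a countable separating family $\{g_{k}\}$, and $\{j^{\ast}g_{k}\}$ is then a countable separating family on $\Y/(\Y\cap\Z)$. The lemma yields $\dens(\Y/(\Y\cap\Z))\le\omega$. Because $\Y$ is nonseparable, $\Y\cap\Z$ cannot be separable, so $\dens(\Y\cap\Z)\ge\omega_{1}$, and the elementary inequality $\dens(\Y)\le\dens(\Y\cap\Z)\vee\dens(\Y/(\Y\cap\Z))$ collapses to $\dens(\Y\cap\Z)=\dens(\Y)$.

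For the general identity I would write $W=\bigcap_{n}E_{n}$ and $\lambda=\bigvee_{n}\codens(E_{n})\vee\omega$. The inequality $\dens(\X)\ge\dens(W)\vee\bigvee_{n}\codens(E_{n})$ is immediate, $W$ being a subspace and each $\X/E_{n}$ a quotient. For the reverse I would form the bounded injection $\Phi\colon\X/W\to(\bigoplus_{n}\X/E_{n})_{c_{0}}$ given by $\Phi(x+W)=(2^{-n}(x+E_{n}))_{n}$, which is well defined into the $c_{0}$-sum and is injective exactly because $W=\bigcap_{n}E_{n}$; the target has density $\le\lambda$. Selecting a separating family of size $\le\lambda$ in the target (a dense set composed with norming functionals) and pulling it back along $\Phi$ produces a separating family of that size on the WLD space $\X/W$, so the lemma gives $\dens(\X/W)\le\lambda$ and therefore $\dens(\X)\le\dens(W)\vee\lambda$, completing the identity.

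The main obstacle, and the sole point where WLD is genuinely used, is the step \emph{``a continuous injection into a space of small density forces small density.''} This is false for general Banach spaces: for instance $\ell^{1}(\Gamma)$ with $|\Gamma|=2^{\omega}$ admits a countable separating family yet is nonseparable, so one cannot argue naively by transporting a dense set through $j$ or $\Phi$ (the quotient norm on $\Y/(\Y\cap\Z)$ may strictly dominate the norm pulled back from $\X/\Z$, and the image need not be closed). The lemma circumvents this precisely through the countable-support property \eqref{eq: count}, forcing every basis index into the small support set $T$ and thereby bounding the density from above. Everything else is routine: heredity of WLD under subspaces and quotients, the density inequalities for subspaces and quotients, and the convergence of the weighted $c_{0}$-sum.
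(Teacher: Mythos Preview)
Your proof is correct and follows precisely the route the paper indicates: the corollary is stated there without argument, merely as ``a consequence of the WLD characterization \eqref{eq: count},'' and your key lemma---that a separating family of size $\mu$ in the dual of a WLD space forces density at most $\mu\vee\omega$---is exactly what one reads off from \eqref{eq: count}. The remaining reductions (heredity of WLD under subspaces and quotients, the injections $j$ and $\Phi$ into spaces of small density) are the natural way to feed the given data into that lemma.
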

\qed

\begin{theorem}\label{thm: cosep}
Let $\X$ be a Banach space with the property that for any countable sequence of functionals $(f_{n})_{n<\omega}\subset\X^{\ast}$ there exists a generating family of vectors 
$\{x_{\gamma}\}_{\gamma\in \Gamma}\subset\X$ such that $|\{\gamma:\ f_{n}(x_{\gamma})\neq 0\}|\leq \aleph_{0}$ for $n<\omega$. 
Then $(\sigma)$ holds, i.e. the coseparable subspaces of $\X$ are stable in countable intersections. 
In particular, this is the case if $\X$ is WLD.
\end{theorem}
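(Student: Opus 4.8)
The plan is to prove the condition $(\sigma)$ stated in the earlier Proposition: given any $\omega^{\ast}$-closed, $\omega^{\ast}$-separable subspace $\Z\subset\X^{\ast}$, I must show $\codens(\Z_{\bot})=\omega$, i.e. that $\X\mod \Z_{\bot}$ is separable. Since $\Z$ is $\omega^{\ast}$-separable, fix a countable $\omega^{\ast}$-dense subset, and from it extract a countable family $(f_n)_{n<\omega}\subset\Z$ whose closed linear span is $\omega^{\ast}$-dense in $\Z$; because $\Z$ is $\omega^{\ast}$-closed, this countable set still $\omega^{\ast}$-generates $\Z$. The key identity I want is $\Z_{\bot}=\bigcap_{n<\omega}\ker f_n$, which holds precisely because $\Z_{\bot}=\{x:\ f(x)=0\ \forall f\in\Z\}$ and the $f_n$ are $\omega^{\ast}$-dense in $\Z$ (so $f(x)=0$ for all $f\in\Z$ iff $f_n(x)=0$ for all $n$). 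Thus the problem reduces to showing that $\X\mod\bigcap_n\ker f_n$ is separable for an arbitrary countable family $(f_n)\subset\X^{\ast}$.

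\emph{Next I would} apply the hypothesis of the theorem directly to this sequence $(f_n)_{n<\omega}$: it furnishes a generating family $\{x_\gamma\}_{\gamma\in\Gamma}\subset\X$ with $|\{\gamma:\ f_n(x_\gamma)\neq 0\}|\leq\aleph_0$ for each $n$. Let $S=\bigcup_{n<\omega}\{\gamma:\ f_n(x_\gamma)\neq 0\}$, which is a countable subset of $\Gamma$ as a countable union of countable sets. I claim the separable subspace $\mathrm{W}=[x_\gamma:\ \gamma\in S]$ together with $\Z_{\bot}$ spans a dense subspace, so that the canonical image of $\mathrm{W}$ is dense in $\X\mod\Z_{\bot}$, yielding separability. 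Indeed, every generator $x_\gamma$ with $\gamma\notin S$ satisfies $f_n(x_\gamma)=0$ for all $n$, hence $x_\gamma\in\bigcap_n\ker f_n=\Z_{\bot}$; and the $x_\gamma$ with $\gamma\in S$ lie in $\mathrm{W}$. Since $\{x_\gamma\}_{\gamma\in\Gamma}$ is generating, $\mathrm{W}+\Z_{\bot}$ contains all generators and is therefore dense in $\X$. Passing to the quotient, the image of the separable $\mathrm{W}$ is dense, so $\codens(\Z_{\bot})=\omega$, establishing $(\sigma)$.

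\emph{For the final clause}, I would verify that a WLD space satisfies the hypothesis. By the characterization \eqref{eq: count}, a WLD space admits an M-basis $\{x_\alpha\}_\alpha$ with $|\{\alpha:\ f(x_\alpha)\neq 0\}|\leq\aleph_0$ for every single $f\in\X^{\ast}$. This M-basis is generating, and applying the countable-support property to each of the countably many $f_n$ in turn gives exactly the required family (the same fixed family $\{x_\alpha\}_\alpha$ works simultaneously for all $n$), so the hypothesis holds.

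The main obstacle I anticipate is the reduction step rather than the combinatorics: one must be careful that $\Z$ being merely $\omega^{\ast}$-separable does indeed yield a \emph{countable} family $(f_n)$ with $\Z_{\bot}=\bigcap_n\ker f_n$. The subtlety is that $\Z_{\bot}$ is defined by annihilation against \emph{all} of $\Z$, so I need the chosen countable set to be $\omega^{\ast}$-dense in $\Z$ (not merely norm-dense or linearly dense), and I must confirm that annihilation against an $\omega^{\ast}$-dense subset forces annihilation against the whole $\omega^{\ast}$-closure — this follows since each $x\in\X$ induces an $\omega^{\ast}$-continuous functional on $\X^{\ast}$, so its kernel is $\omega^{\ast}$-closed. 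Once this identity is secured, the rest is a clean support-counting argument.
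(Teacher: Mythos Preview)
Your proposal is correct and follows essentially the same approach as the paper. The only cosmetic difference is packaging: the paper starts from a countable sequence $(\Z_n)$ of coseparable subspaces, picks separating sequences $(g_{n,k})_k\subset\Z_n^{\bot}$, and relabels them as $(f_n)$, whereas you invoke the equivalent condition $(\sigma)$ and extract the $(f_n)$ from a single $\omega^{\ast}$-separable $\Z$; from that point on both arguments coincide---split the index set $\Gamma$ into the countable set $\Gamma_0$ (your $S$) of indices where some $f_n$ is nonzero and its complement, observe that the complement generators lie in $\bigcap_n\ker f_n$, and conclude separability of the quotient.
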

\begin{proof}
Suppose that $\X$ is a Banach space with the above property. We will show that if $(\Z_{n})$ is a sequence of closed subspaces of $\X$ such that $\omega^{\ast}$-$\mathrm{dens}((\X\mod \Z_{n})^{\ast})=\omega$ for $n<\omega$, then 
$\X\mod \bigcap_{n<\omega}\Z_{n}$ is separable. 

Indeed, for each $n$ let $(g_{n,k})_{k}\subset \Z_{n}^{\bot}$ be a sequence which separates $\X\mod \Z_{n}$.
Reorganize $(f_{n})_{n<\omega}=\{g_{n,k}:\ n,k<\omega\}$ and let $\{x_{\gamma}\}_{\gamma\in\Gamma}$ be a generating family of 
vectors as in the assumptions. Write 
\[\Gamma_{0}=\{\gamma\in \Gamma:\ f_{n}(x_{\gamma})\neq 0,\ \mathrm{for\ some}\ n<\omega\},\]
and observe that $\Gamma_{0}$ is countable. 
According to the selection of $(f_{n})$, there is for each $x\in \X\setminus \bigcap_{n<\omega}\Z_{n}$ an index $n$
such that $f_{n}(x)\neq 0$. This reads 
\[ [x_{\gamma}:\ \gamma\in \Gamma\setminus \Gamma_{0}]\subset \bigcap_{n<\omega}\ker f_{n}\subset\bigcap_{n<\omega}\Z_{n}.\] 
Since $\X=[\{x_{\gamma}\}_{\gamma}]$, we have the following chain of surjective quotient mappings
\[ [x_{\gamma}:\ \gamma\in \Gamma_{0}]\longrightarrow \X\mod [x_{\gamma}:\ \gamma\in \Gamma\setminus \Gamma_{0}] \longrightarrow \X\mod \bigcap_{n<\omega}\ker f_{n} \longrightarrow \X\mod \bigcap_{n<\omega}\Z_{n}.\]
As the quotient mappings are continuous (in fact contractive), we obtain that $\X\mod \bigcap_{n<\omega}\Z_{n}$ is separable.
The proof is finished by taking into account the characterization \eqref{eq: count} of WLD spaces.
\end{proof}

\begin{theorem}\label{thm: sigma_cosep}
Suppose that $\X$ is a nonseparable Banach space satisfying $(\sigma)$. 
\begin{enumerate}
\item[(i)]{Then $\X$ has a monotone basic sequence of length $\omega_{1}$. Moreover, any basic sequence of $\X$ having countable order type has an uncountable extension.} 
\item[(ii)]{Given a separable subspace $A\subset \X$ there exists a coseparable subspace $M\subset \X$ such that
$A$ is $1$-complemented in $M$.}
\end{enumerate}
\end{theorem}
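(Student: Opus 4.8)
The plan is to run everything off property $(\sigma)$, used in the form that the pre-annihilator of any $\omega^{\ast}$-closed, $\omega^{\ast}$-separable subspace of $\X^{\ast}$ is coseparable, together with the elementary fact that a coseparable subspace of the nonseparable space $\X$ is itself nonseparable (otherwise $\dens(\X)\leq\dens(\Y)\vee\codens(\Y)$ would be countable). For part (ii), since $A$ is separable I would fix a countable set $\{f_{n}\}_{n<\omega}\subset\S_{\X^{\ast}}$ that $1$-norms $A$ and put $\Z=\overline{[f_{n}:\ n<\omega]}^{\omega^{\ast}}$, which is $\omega^{\ast}$-closed and $\omega^{\ast}$-separable. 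Then $(\sigma)$ gives $\codens(\Z_{\bot})=\omega$, and $\omega^{\ast}$-continuity of the evaluation maps $f\mapsto f(x)$ yields $\Z_{\bot}=\bigcap_{n}\ker f_{n}$. Setting $M=\overline{A+\Z_{\bot}}$, monotonicity of $\codens$ and $\Z_{\bot}\subset M$ force $M$ to be coseparable. Because $\{f_{n}\}$ $1$-norms $A$ and annihilates $\Z_{\bot}$, one gets $A\cap\Z_{\bot}=\{0\}$ and $\|a\|=\sup_{n}|f_{n}(a)|=\sup_{n}|f_{n}(a+z)|\leq\|a+z\|$ for $a\in A$, $z\in\Z_{\bot}$; hence $P(a+z)=a$ is a contractive idempotent on the dense subspace $A\oplus\Z_{\bot}$ of $M$ and extends to a norm-one projection of $M$ onto $A$.

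For part (i) I would build the sequence by transfinite recursion on $\beta<\omega_{1}$, carrying along an increasing chain of countable sets $\mathcal{F}_{\beta}\subset\S_{\X^{\ast}}$. At stage $\beta$ I take $\mathcal{F}_{\beta}$ to be a countable set that $1$-norms the separable space $[x_{\alpha}:\ \alpha<\beta]$ and contains $\bigcup_{\gamma<\beta}\mathcal{F}_{\gamma}$; this is possible since $\beta$ is countable. Writing $\Z_{\beta}=\overline{[\mathcal{F}_{\beta}]}^{\omega^{\ast}}$, property $(\sigma)$ makes $(\Z_{\beta})_{\bot}=\bigcap_{f\in\mathcal{F}_{\beta}}\ker f$ coseparable, hence nonseparable, so I may pick $x_{\beta}\in\S_{(\Z_{\beta})_{\bot}}$. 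Since $\mathcal{F}_{\beta}$ separates $[x_{\alpha}:\ \alpha<\beta]$ and kills $x_{\beta}$, the new vector lies outside the previous span, and the recursion proceeds through all $\beta<\omega_{1}$.

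To verify monotonicity, fix $\lambda<\omega_{1}$, $y\in[x_{\alpha}:\ \alpha<\lambda]$ and $z\in[x_{\alpha}:\ \lambda\leq\alpha<\omega_{1}]$. For $f\in\mathcal{F}_{\lambda}$ and $\alpha\geq\lambda$ we have $\mathcal{F}_{\lambda}\subset\mathcal{F}_{\alpha}$ and $x_{\alpha}\in\ker f$, so $f(z)=0$ and $f(y+z)=f(y)$; as $\mathcal{F}_{\lambda}$ $1$-norms $[x_{\alpha}:\ \alpha<\lambda]$, this gives $\|y\|=\sup_{f\in\mathcal{F}_{\lambda}}|f(y)|=\sup_{f\in\mathcal{F}_{\lambda}}|f(y+z)|\leq\|y+z\|$, so the basis is monotone. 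For the \emph{moreover}, I would run the same recursion with a given $C$-basic sequence $\{x_{\alpha}\}_{\alpha<\beta_{0}}$ of countable order type as the initial segment and $\mathcal{F}_{\beta_{0}}$ chosen to $1$-norm $[x_{\alpha}:\ \alpha<\beta_{0}]$. Splits with $\lambda\geq\beta_{0}$ are handled as above with constant $1$; for $\lambda<\beta_{0}$ one first applies the contractive projection onto $[x_{\alpha}:\ \alpha<\beta_{0}]$, which kills the newly added tail (because $\mathcal{F}_{\beta_{0}}$ annihilates every later $x_{\alpha}$), reducing to a split inside the original segment and yielding $\|y\|\leq C\|y+z\|$.

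The main obstacle is preserving the monotonicity condition at \emph{every} initial segment simultaneously across an uncountable recursion. The device that overcomes it is the accumulation of the $1$-norming functionals into the increasing chain $\mathcal{F}_{\beta}$ together with the choice of each $x_{\beta}$ inside $\bigcap_{f\in\mathcal{F}_{\beta}}\ker f$: property $(\sigma)$ is exactly what keeps this intersection nonseparable, so a fresh unit vector is always available, and it simultaneously ensures that each $\mathcal{F}_{\lambda}$ annihilates the entire tail, which is what pins the basis constant at $1$.
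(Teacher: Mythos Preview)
Your proposal is correct and follows essentially the same approach as the paper: in both parts you pick a countable $1$-norming family for the separable piece, invoke $(\sigma)$ to make the common kernel coseparable (hence nonseparable), and use the norming/annihilating dichotomy to produce the contractive projection or the next basis vector. The only cosmetic difference is that you carry an increasing chain $\mathcal{F}_{\beta}$ so that $\mathcal{F}_{\lambda}$ annihilates the entire tail at once, whereas the paper re-chooses a fresh $1$-norming sequence at each stage and relies on the standard successor-step inequality $\|x\|\leq\|x+tx_{\alpha}\|$ to propagate monotonicity; both devices yield the same conclusion.
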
 
\begin{proof}
Let us check the latter claim in $(i)$. This argument essentially covers both claims. Let $(x_{n})_{n<\alpha},\ \alpha<\omega_{1},$ be a countable basic sequence on $\X$. 
By using the separability of $[x_{n}:\ n<\alpha]$ we may let $(f_{i})_{i<\omega}\subset\X^{\ast}$ be a $1$-norming sequence for $[x_{n}:\ n<\alpha]$.  According to $(\sigma)$ we have that $\bigcap_{i<\omega}\ker(f_{i})$ is a coseparable subspace, 
in particular non-trivial. Hence we may pick $x_{\alpha}\in \bigcap_{i<\omega}\ker(f_{i}),\ \|x_{\alpha}\|=1$. Note that $\|x\|\leq \|x+tx_{\alpha}\|$ for any $x\in [x_{n}:\ n<\alpha]$ and $t\in\R$.
We proceed by recursion of length $\omega_{1}$.

Let us check $(ii)$. Let $(g_{n})_{n<\omega}\subset \S_{X^{\ast}}$ be a $1$-norming sequence for $A$. Let $\Z=\bigcap_{n<\omega}\ker(g_{n})$ and $M=[A\cup \Z]$. According to $(\sigma)$ $\Z$ is coseparable, and thus $M$ is coseparable. Since $(g_{n})_{n<\omega}\subset \S_{X^{\ast}}$ is $1$-norming, we obtain, similarly as in the $\sigma(\X,\Z)$-null case of the proof of Theorem \ref{thm: first_main}, that there is a contractive projection $P\colon M\to A$. 
\end{proof} 

\begin{theorem}
Let $\X$ be a Banach space. Then the following conditions are equivalent:
\begin{enumerate}
\item[(i)]{$\X$ is WLD}
\item[(ii)]{$\X$ has property $(C)$ and admits an M-basis.}
\item[(iii)]{$\X$ satisfies condition $(\mathrm{II})$ and admits an M-basis.}
\end{enumerate}
\end{theorem}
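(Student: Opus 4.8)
The plan is to prove the three statements equivalent by running the cycle (i) $\Rightarrow$ (ii) $\Rightarrow$ (iii) $\Rightarrow$ (i), dispatching the first two implications by quoting the machinery already in place. For (i) $\Rightarrow$ (ii): if $\X$ is WLD, then the implication chain $\text{WLD} \Rightarrow \text{weakly Lindel\"of} \Rightarrow (C)$ gives property $(C)$, while the characterization \eqref{eq: count} produces an M-basis outright, so (ii) holds. For (ii) $\Rightarrow$ (iii): property $(C)$ is equivalent to Pol's reformulation $(C')$, and $(C')\Rightarrow(\mathrm{II})$ was already recorded; one simply carries the same M-basis along, so (iii) holds.

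The substance is (iii) $\Rightarrow$ (i), which I would prove by verifying that the given M-basis $\{(x_{\alpha},x_{\alpha}^{\ast})\}_{\alpha\in\Gamma}$ satisfies the support condition \eqref{eq: count}, i.e.\ that $S_{f}=\{\alpha:\ f(x_{\alpha})\neq 0\}$ is countable for every $f\in\X^{\ast}$. Arguing by contradiction, suppose $S_{f}$ is uncountable and pick distinct indices $\{\beta_{\sigma}\}_{\sigma<\omega_{1}}\subset S_{f}$. I would then feed the nested decreasing family of tail subspaces $Z_{\rho}:=[x_{\beta_{\sigma}}:\ \rho\leq\sigma<\omega_{1}]$, $\rho<\omega_{1}$, into condition $(\mathrm{II})$. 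Since $x_{\beta_{\rho}}\in Z_{\rho}$ and $f(x_{\beta_{\rho}})\neq 0$, we have $f\notin Z_{\rho}^{\bot}$ for every $\rho$; the contrapositive of $(\mathrm{II})$ then forces $f\notin\bigl(\bigcap_{\rho<\omega_{1}}Z_{\rho}\bigr)^{\bot}$, so there is $w\in\bigcap_{\rho}Z_{\rho}$ with $f(w)\neq 0$, and in particular $w\neq 0$.

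The contradiction comes from biorthogonality. For any $\gamma\in\Gamma$ the functional $x_{\gamma}^{\ast}$ annihilates $x_{\beta_{\sigma}}$ whenever $\beta_{\sigma}\neq\gamma$, hence it vanishes on $Z_{\rho}$ once $\rho$ exceeds the at most one index with $\beta_{\sigma}=\gamma$; as $w$ lies in every $Z_{\rho}$ and $x_{\gamma}^{\ast}$ is continuous, $x_{\gamma}^{\ast}(w)=0$. Because the biorthogonal functionals of an M-basis separate points of $\X$, this gives $w=0$, the desired contradiction. Thus every $S_{f}$ is countable and $\X$ is WLD by \eqref{eq: count}. (One could instead route this through Proposition \ref{prop1}(iv), observing that $\{x_{\beta_{\sigma}}\}$ fails to be weakly null and hence fails to be $\SD$, but the direct application of $(\mathrm{II})$ seems cleaner.)

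I would flag the passage from the abstract subspace condition $(\mathrm{II})$ to the coordinatewise support statement as the crux: the only genuine work is choosing the tail filtration $\{Z_{\rho}\}$ so that, on the one hand, $f$ fails to annihilate every member (which, via $(\mathrm{II})$, manufactures a nonzero vector in the intersection) while, on the other hand, that intersection is killed by all the biorthogonal functionals. Everything else---the equivalence $(C)\Leftrightarrow(C')$, the implications in the WLD chain, and the characterization \eqref{eq: count}---is quoted, so the real obstacle reduces to organizing this single contradiction. As a sanity check, the argument uses only that $\omega_{1}$ is an uncountable regular cardinal, which is precisely the setting in which $(\mathrm{II})$ was formulated.
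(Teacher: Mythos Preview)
Your proof is correct, and for the crucial implication (iii) $\Rightarrow$ (i) it is actually cleaner than the paper's. The paper proceeds differently there: it first invokes Lemma~\ref{lm: Mbaselemma} to extract from $\{x_{\alpha}\}$ an M-basic subsequence $\{x_{\alpha_{\beta}}\}_{\beta<\omega_{1}}$ containing $\omega_{1}$-many indices of the support, passes to the subspace $\Y=[x_{\alpha_{\beta}}:\beta<\omega_{1}]$, and then applies $(\mathrm{II})$ (using its hereditary property) to the \emph{initial} filtration $\bigcap_{\beta\leq\gamma}\ker(f_{\alpha_{\beta}}|_{\Y})$, whose intersection is trivial because the subsequence is M-basic on $\Y$. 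Your argument bypasses both the extraction lemma and the passage to a subspace by applying $(\mathrm{II})$ directly in $\X$ to the tail filtration $Z_{\rho}$, and then killing the resulting nonzero intersection vector $w$ with the full family of biorthogonal functionals. What the paper's detour buys is that totality is verified inside a controlled subspace, but since the biorthogonal functionals of the original M-basis already separate all of $\X$, your direct route needs no such reduction and is more economical.
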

\begin{proof}
The directions (i)$\Leftrightarrow$(ii) are known (see \cite{Hajek_biortsyst}), and (ii)$\Rightarrow$(iii) is clear. 

Thus it suffices to check that the implication (iii)$\implies$(i) holds. Suppose that $\{(f_{\alpha},x_{\alpha})\}_{\alpha}$ is an M-basis of $\X$. 
We will check that the characterization \eqref{eq: count} for WLD spaces holds. 

Assume to the contrary that there is $f\in \X^{\ast}$ such that $\Gamma=\{\alpha :\ f(x_{\alpha})\neq 0\}$ 
satisfies $|\Gamma|\geq \omega_{1}$. By applying Lemma \ref{lm: Mbaselemma}, we find an M-basic sequence $\{x_{\alpha_{\beta}}\}_{\beta<\omega_{1}}\subset \{x_{\alpha}\}_{\alpha}$ containing $\omega_{1}$-many indices of $\Gamma$. 

Put $\Y=[\{x_{\alpha_{\beta}}\}_{\beta<\omega_{1}}]$. Since the chosen M-basic sequence is total on $\Y$, we have that 
\[\bigcap_{\gamma<\omega_{1}}\bigcap_{0\leq\beta\leq\gamma}\ker (f_{\alpha_{\beta}}|_{\Y})=\{0\}\subset\Y,\]
so that condition $(\mathrm{II})$ can be applied. We obtain that there is $\gamma<\omega_{1}$ such that 
$f|_{\Y}\in \left(\bigcap_{0\leq\beta\leq \gamma}\ker(f_{\alpha_{\beta}}|_{\Y})\right)^{\bot}\subset\Y^{\ast}$. This contradicts the fact that the chosen M-basic sequence contains uncountably many indices of $\Gamma$. Consequently, \eqref{eq: count} holds.
\end{proof}

\section{Interpreting the conditions on subspace chains in terms of inverse limits} 

\normalfont

The nested sequence of subspaces $\{Z_{\alpha}\}_{\alpha<\kappa}$ of $\X$ yields in a natural way a direct system with the initial object $Z=\bigcap_{\alpha<\kappa}Z_{\alpha}$ and the terminal object $Z_{0}$. 
Next, we will discuss the connections of conditions $(\mathrm{I}),(\mathrm{II})$ and $(\mathcal{B})$ to 
direct and inverse limits. It turns out that there is a very natural reformulation for these conditions in terms of these limits, and this partly motivates the application of the conditions.

Let us begin by observing that $(\mathrm{II})$ has a very simple reformulation in terms of direct limits, namely that
the direct system 
\[Z_{0}^{\bot}\longrightarrow Z_{1}^{\bot}\longrightarrow \ldots \longrightarrow Z_{\alpha}^{\bot}\longrightarrow \ldots, \quad \alpha<\kappa,\]
satisfies $\underset{\longrightarrow}{\lim}\ Z_{\alpha}^{\bot}=Z^{\bot}$. Here the inclusion maps serve as the binding maps.
In a sense, this is a statement about the continuity of the functor $(\cdot)^{\bot}$.

However, the inverse limits are more interesting in this setting. Note that a nested sequence $\{A_{\alpha}\}_{\alpha<\kappa}$ of closed affine subspaces of $\X$ can be written as $\{x_{\alpha}+Z_{\alpha}\}_{\alpha<\kappa}$, where $x_{\alpha}-x_{\beta}\in Z_{\alpha}$ for $\alpha<\beta<\kappa$. Thus each such sequence $\{A_{\alpha}\}_{\alpha<\kappa}$ is realized as an element of the inverse limit of the inverse system
\[\X\mod Z_{0}\longleftarrow \X\mod Z_{1}\longleftarrow \ldots \longleftarrow \X\mod Z_{\alpha}\longleftarrow \ldots,\quad \alpha<\kappa,\]
and vice versa. Here the binding maps are the natural quotient maps $T_{\alpha}^{\beta}\colon \X\mod Z_{\beta}\longrightarrow \X\mod Z_{\alpha},\ x+Z_{\beta}\mapsto x+Z_{\alpha}$ for $\alpha\leq \beta<\kappa$. We endow 
$\underset{\longleftarrow}{\lim}\ \X\mod Z_{\alpha}$ with the norm 
$\|(A_{\alpha})_{\alpha<\kappa}\|_{\underset{\longleftarrow}{\lim}\ \X\mod Z_{\alpha}}=\sup_{\alpha<\kappa}\|A_{\alpha}\|_{\X\mod Z_{\alpha}}=\sup_{\alpha<\kappa}\dist(0,A_{\alpha})$.
It is not hard to check that this norm is complete by using the completeness of the quotient norms and the regularity of $\kappa$.
The natural inclusion mapping $\phi\colon \X\mod Z\hookrightarrow \underset{\longleftarrow}{\lim}\ \X\mod Z_{\alpha}$
given by $x+Z \mapsto (x+Z_{\alpha})_{\alpha<\kappa}$ is an injective, contractive homomorphism. 
Injectivity follows from the fact that for each $x+Z\in \X\mod Z,\ x\notin Z,$ there is $\alpha<\kappa$ 
such that $x\notin Z_{\alpha}$. 

Now, condition $(\mathrm{I})$ states that for each pair of sequences $\{Z_{\alpha}\}_{\alpha<\kappa}$, $\{A_{\alpha}\}_{\alpha<\kappa}$, as above, the corresponding mapping $\phi\colon \X\mod Z\to \underset{\longleftarrow}{\lim}\ \X\mod Z_{\alpha}$ is onto. We note that the open mapping principle yields that $\phi$ is actually an isomorphism in such a case.
This reformulation should be compared to the definition of reflexivity of Banach spaces. 

Condition $(\mathcal{B})$ is equivalent to the statement that for any sequence $\{Z_{\alpha}\}_{\alpha<\kappa}$ having trivial intersection there exists $r\geq 1$ such that $\|x\|\leq r\sup_{\alpha}\dist(x,Z_{\alpha})$ for all $x\in \X$. This holds if and only if $\phi\colon \X\to \underset{\longleftarrow}{\lim}\ \X\mod Z_{\alpha}$ is an embedding. Thus, at this point it becomes apparent why $(\mathrm{I})$ implies $(\mathcal{B})$. Namely, if $\phi$ is an isomorphism, then it is a fortiori an embedding. Observe that $(1$-$\mathcal{B})$ holds if and only if $\phi$ is an isometric embedding. To conclude, if $\X$ satisfies Corson's property $(C)$, then $\phi$ is an isometric isomorphism for each inverse limit as above.

The inverse limits defined by using \emph{countable} sequences of subspaces $(Z_{n})_{n<\omega}$ seem to expand the 
space easily, compared to sequences indexed by $\kappa$, unless we make some strong assumptions about $\X$, like 
reflexivity or the RNP (see \cite{CK}). For example, 
$\underset{\longleftarrow}{\lim}\ (c_{0}\mod [e_{n}:\ n\geq i])_{i<\omega}=\ell^{\infty}$. 

Finally, we will apply the mapping $\phi$ in the setting of topological vector spaces. It turns out that 
considering $\phi$ is useful already in the case where $\phi$ is a closed mapping.
 
Given a topological vector space $Y$ and a sequence $\{Z_{\alpha}\}_{\alpha<\kappa}$ of closed subspaces, the inverse limit of the inverse system
\[Y\mod Z_{0}\longleftarrow Y\mod Z_{1}\longleftarrow \ldots \longleftarrow Y\mod Z_{\alpha}\longleftarrow \ldots,\quad 
\alpha<\kappa\]
is topologized by the product topology inherited from $\prod_{\alpha<\kappa}Y\mod Z_{\alpha}$. 
Recall that the quotient topology on $Y\mod M$, $M\subset \Y$ being a closed linear subspace, consists of open sets of the form $U+M$, where $U\subset Y$ is open. Thus $\overline{A\mod M}\subset Y\mod M$ is $\overline{A+M}\subset Y$ as a set for any $A\subset Y$. Recall that $Y\mod M$ are Hausdorff and even completely regular spaces. The mapping 
$\phi\colon Y\mod Z \to \underset{\longleftarrow}{\lim}\ Y\mod Z_{\alpha}$ is defined similarly as above, and it is 
continuous and injective.

\begin{theorem}\label{thm: tvs_biort}
Let $X$ be a topological vector space and let $\{x_{\alpha}\}_{\alpha<\kappa}\subset X$ be a sequence such that 
\[Y=[x_{\alpha}:\ \alpha<\kappa]\Big{/} \bigcap_{\beta<\kappa}[x_{\alpha}:\ \beta<\alpha<\kappa]\]
has density $\kappa$ and suppose that the canonical inclusion
\[\phi\colon Y\hookrightarrow \underset{\longleftarrow}{\lim}\ (Y\mod [x_{\alpha}:\ \alpha\geq \beta])_{\beta<\kappa}\]
is a closed mapping. Then there exists a minimal sequence $\{x_{\alpha_{\beta}}\}_{\beta<\kappa}$.
\end{theorem}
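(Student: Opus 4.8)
The plan is to carry out the whole extraction inside $Y$ and then lift it. Write $q\colon [x_{\alpha}:\ \alpha<\kappa]\to Y$ for the quotient map, $y_{\alpha}=q(x_{\alpha})$, and $Z_{\beta}=[y_{\alpha}:\ \alpha\geq\beta]\subset Y$ for the closed tail subspaces; by the choice of the subspace we quotiented out, $\bigcap_{\beta<\kappa}Z_{\beta}=\{0\}$, which is exactly the injectivity of $\phi$. Since $q$ is continuous, a subsequence $\{y_{\alpha_{\beta}}\}_{\beta<\kappa}$ that is minimal in $Y$ lifts to a minimal subsequence $\{x_{\alpha_{\beta}}\}_{\beta<\kappa}$ in $X$: if $x_{\alpha_{\beta_{0}}}\in[x_{\alpha_{\gamma}}:\ \gamma\neq\beta_{0}]$, then applying $q$ gives $y_{\alpha_{\beta_{0}}}\in[y_{\alpha_{\gamma}}:\ \gamma\neq\beta_{0}]$. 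So it suffices to produce a minimal $\{y_{\alpha_{\beta}}\}_{\beta<\kappa}$ in $Y$. I would build the indices by transfinite recursion so that each $y_{\alpha_{\beta}}$ is separated simultaneously from the span of the earlier vectors and from a tail containing all later ones, and the entire construction rests on the following extraction step: \emph{for every closed subspace $E\subset Y$ with $\dens(E)<\kappa$ and every $\eta<\kappa$ there are $\alpha\in[\eta,\kappa)$ and $\delta\in(\alpha,\kappa)$ with $y_{\alpha}\notin\overline{E+Z_{\delta}}$.}

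The engine behind this step is the observation that closedness of $\phi$ forces $\bigcap_{\delta<\kappa}\overline{E+Z_{\delta}}=E$ for every closed subspace $E\subset Y$. To see this, write $q_{\delta}\colon Y\to Y\mod Z_{\delta}$ and $\pi_{\delta}\colon L\to Y\mod Z_{\delta}$ for $L=\underset{\longleftarrow}{\lim}\ Y\mod Z_{\delta}$, so that $q_{\delta}=\pi_{\delta}\circ\phi$. Since the quotient maps $q_{\delta}$ are open and continuous and $E+Z_{\delta}=q_{\delta}^{-1}(q_{\delta}(E))$, one has $\overline{E+Z_{\delta}}=q_{\delta}^{-1}(\overline{q_{\delta}(E)})$, whence $w\in\overline{E+Z_{\delta}}$ iff $q_{\delta}(w)\in\overline{q_{\delta}(E)}$. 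A point $(a_{\delta})_{\delta}\in L$ lies in $\overline{\phi(E)}^{L}$ precisely when $a_{\delta}\in\overline{q_{\delta}(E)}$ for every $\delta$: a basic product neighbourhood constrains only finitely many coordinates, and, using the coherence relations $T_{\alpha}^{\beta}(a_{\beta})=a_{\alpha}$ together with continuity of the binding maps, it is enough to match $\phi(e)$ to the largest of those coordinates (possible since $a_{\delta}\in\overline{q_{\delta}(E)}$), which then automatically matches the smaller ones. Consequently, if $w\in\bigcap_{\delta}\overline{E+Z_{\delta}}$ then $\phi(w)=(q_{\delta}(w))_{\delta}\in\overline{\phi(E)}^{L}$; but $E$ is closed and $\phi$ is a closed map, so $\phi(E)$ is closed, giving $\phi(w)\in\phi(E)$ and, by injectivity, $w\in E$.

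The extraction step now follows by contradiction and a density count. If it fails there are a closed $E$ with $\dens(E)<\kappa$ and an $\eta<\kappa$ such that $y_{\alpha}\in\overline{E+Z_{\delta}}$ for all $\alpha\in[\eta,\kappa)$ and $\delta\in(\alpha,\kappa)$. Fixing $\delta^{*}>\eta$ and using that $\overline{E+Z_{\delta}}$ decreases in $\delta$, every $y_{\alpha}$ with $\eta\leq\alpha<\delta^{*}$ lies in $\overline{E+Z_{\delta^{*}}}$, while every $y_{\alpha}$ with $\alpha\geq\delta^{*}$ lies in $Z_{\delta^{*}}\subset\overline{E+Z_{\delta^{*}}}$; hence $Z_{\eta}=[y_{\alpha}:\ \alpha\geq\eta]\subset\overline{E+Z_{\delta^{*}}}$ for every $\delta^{*}>\eta$, so $Z_{\eta}\subset\bigcap_{\delta<\kappa}\overline{E+Z_{\delta}}=E$. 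Since $\dens(Y)=\kappa$ and $\dens([y_{\alpha}:\ \alpha<\eta])<\kappa$, we get $\dens(Z_{\eta})=\kappa>\dens(E)$, which is impossible for a subspace of $E$.

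Finally I would run the recursion. Put $\alpha_{0}=0$, and at stage $\beta<\kappa$ let $E_{\beta}=[y_{\alpha_{\gamma}}:\ \gamma<\beta]$, a closed subspace of density $|\beta|\vee\omega<\kappa$, and apply the extraction step with $\eta=\sup_{\gamma<\beta}(\delta_{\gamma}+1)$ to obtain $\alpha_{\beta}\geq\eta$ and $\delta_{\beta}>\alpha_{\beta}$ with $y_{\alpha_{\beta}}\notin\overline{E_{\beta}+Z_{\delta_{\beta}}}$; regularity of $\kappa$ keeps these suprema below $\kappa$ at limit stages. The gap condition $\alpha_{\beta+1}\geq\delta_{\beta}$ ensures $y_{\alpha_{\gamma}}\in Z_{\delta_{\beta}}$ for every $\gamma>\beta$, while $y_{\alpha_{\gamma}}\in E_{\beta}$ for $\gamma<\beta$; hence $[y_{\alpha_{\gamma}}:\ \gamma\neq\beta]\subset\overline{E_{\beta}+Z_{\delta_{\beta}}}$, which $y_{\alpha_{\beta}}$ avoids, so $\{y_{\alpha_{\beta}}\}_{\beta<\kappa}$ is minimal. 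The hard part will be the intersection identity of the second paragraph — in particular the correct computation of $\overline{\phi(E)}$ in the product topology, where the coherence of the inverse system must be exploited to collapse infinitely many coordinate constraints to a single one, and where closedness of $\phi$ (rather than mere injectivity) is essential; the remaining recursion and density count are routine.
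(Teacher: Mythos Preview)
Your argument is correct and follows essentially the paper's route: a transfinite recursion in $Y$ where the density assumption supplies each new vector and closedness of $\phi$ --- via the single-coordinate basic neighbourhoods of the inverse-limit topology --- separates it from a suitable tail $Z_{\delta}$; your only reorganization is to isolate this as the identity $\bigcap_{\delta}\overline{E+Z_{\delta}}=E$ and a reusable extraction lemma, whereas the paper invokes closedness directly at each stage after first making the density choice. One cosmetic slip: declaring $\alpha_{0}=0$ clashes with obtaining $\alpha_{0}$ from the extraction step and fails if $y_{0}=0$ (i.e.\ $x_{0}\in Z$); simply let the extraction step produce $\alpha_{0}$ as well, or pass at the outset to a subsequence with $x_{\alpha}\notin Z$, as the paper does.
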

\begin{proof}
Write $Z_{\beta}=[x_{\alpha}:\ \beta<\alpha<\kappa]$ for $\beta<\kappa$ and $Z=\bigcap_{\beta<\kappa}Z_{\beta}$. 
Without loss of generality we may assume, possibly by deleting suitable indices and selecting a subsequence, that $x_{\alpha}\notin Z$ for $\alpha<\kappa$.
Let us consider the inverse system
\[\{0\}\longleftarrow Y\mod Z_{0}\longleftarrow Y\mod Z_{1}\longleftarrow Y\mod Z_{2}\longleftarrow \ldots \longleftarrow Y\mod Z_{\alpha}\longleftarrow \ldots\quad \alpha<\kappa.\]

Next, we will extract a minimal sequence $\{x_{\gamma_{\mu}}\}_{\mu<\kappa}$. 
We will define increasing sequences $\{\gamma_{\beta}\}_{\beta<\kappa},\{\alpha_{\beta}\}_{\beta<\kappa}\subset \kappa$ recursively as follows. Put $\gamma_{0}=0$. Then there is $\alpha_{0}<\kappa$ such that $x_{0}\notin Z_{\alpha_{0}}$. Since $Y$ has density $\kappa$, there exists $\gamma_{1}\in (\alpha_{0},\kappa)$ such that $x_{\gamma_{1}}\notin \overline{[x_{0}]+Z}$. Indeed, otherwise $\overline{[x_{\alpha}:\ \alpha_{0}<\alpha<\kappa]\mod Z}\subset \overline{[x_{0}]\mod Z}\subset Y$ 
yielding that $Y$ has density less than $\kappa$, a contradiction. Then there exists $\alpha_{1}\in (\alpha_{0},\kappa)$ such that $x_{\gamma_{1}}\notin Z_{\alpha_{1}}$. 

Given $\mu<\kappa$ and $\{\alpha_{\beta}\}_{\beta<\mu},\{\gamma_{\beta}\}_{\beta<\mu}\subset \kappa$ there exists, according to the density assumption on $Y$, an ordinal $\gamma_{\mu}\in (\bigvee_{\beta<\mu}(\alpha_{\beta}\vee \gamma_{\beta}),\kappa)$
such that $x_{\gamma_{\mu}}+Z\notin \overline{[x_{\gamma_{\beta}}:\ \beta<\mu]\mod Z}\subset Y$. Then we choose $\alpha_{\mu}\in (\bigvee_{\beta<\mu}(\alpha_{\beta} \vee \gamma_{\beta}),\kappa)$ such that 
$x_{\gamma_{\mu}}\notin \overline{[x_{\gamma_{\beta}}:\ \beta<\mu]\mod Z_{\alpha_{\mu}}}\subset Y\mod Z_{\alpha_{\mu}}$. 
Indeed, this can be accomplished, since
$\phi\colon Y\to \underset{\longleftarrow}{\lim}\ Y\mod Z_{\alpha}$ is a closed mapping. Namely, $\phi$ maps 
$\overline{[x_{\gamma_{\beta}}:\ \beta<\mu]\mod Z}\subset Y$ to a closed set of $\underset{\longleftarrow}{\lim}\ Y\mod Z_{\alpha}$. Since $\phi$ is an injection, there is an open neighbourhood for $\phi(x_{\gamma_{\mu}}+Z)$, which does not
intersect $\phi(\overline{[x_{\gamma_{\beta}}:\ \beta<\mu]\mod Z})=\overline{\phi([x_{\gamma_{\beta}}:\ \beta<\mu]\mod Z)}$. Since the topology of $\underset{\longleftarrow}{\lim}\ Y\mod Z_{\alpha}$ has a basis consisting of sets of the form $q_{\theta}^{-1}(U)$, where $q_{\theta}\colon \underset{\longleftarrow}{\lim}\ Y\mod Z_{\alpha}\to Y\mod Z_{\theta}$ is the natural quotient map and $U\subset Y\mod Z_{\theta}$ is open, we conclude that there is $\eta<\kappa$ such that 
$x_{\gamma_{\mu}}+Z_{\alpha}\notin \overline{[x_{\gamma_{\beta}}:\ \beta<\mu]\mod Z_{\alpha}}\subset Y\mod Z_{\alpha}$
holds for all $\alpha>\eta$. 

This results in the required minimal sequence $\{x_{\gamma_{\mu}}\}_{\mu<\kappa}$, since 
$x_{\gamma_{\beta}}\in Z_{\alpha_{\mu}}$ for $\beta>\mu$ by the construction.
\end{proof}

\end{document}